\def\ie{{i.e., }}
\def\eg{{e.g., }}
\def\R{\mathbb R}
\def\Z{\mathbb Z}
\def\U{\mathcal U}
\def\V{\mathcal V}
\def\D{\mathcal D}
\def\Dc{\mathcal D^c}
\def\H{\mathcal H}
\def\Hyx{\mathcal H_{y\to x}}
\def\W{\mathcal W}
\def\Ws{\mathcal W^s}
\def\Wu{\mathcal W^u}
\def\Wc{\mathcal W^c}
\def\Es{E^s}
\def\Eu{E^u}
\def\Ec{ E^c}
\def\P{\EuScript P}
\def\A{\EuScript A}
\def\B{\EuScript B}
\def\K{\EuScript K}
\def\E{\EuScript C}
\def\ss{\EuScript S}
\def\T{\mathbb T}
\def\e{\varepsilon}
\def\diff-sm{diffeomorphism}
\def\nbhd{neighborhood }
\def\R{Riemannian }
\def\ph{partially hyperbolic }
\def\phsp{partially hyperbolic skew products }
\def\homeo{homeomorphism }
\def\dc{dynamically coherent }
\newtheorem*{q}{Main Question}
\newtheorem*{TC}{Main Theorem}
\newtheorem{prop}{Proposition}[section]
\newtheorem{question}[prop]{Question}
\newtheorem{theorem}[prop]{Theorem}
\newtheorem*{claim}{Claim}
\newtheorem{cor}[prop]{Corollary}
\newtheorem{lemma}[prop]{Lemma}
 \theoremstyle{remark}
\newtheorem{remark}[prop]{Remarks}
\newtheorem{rmk}[prop]{Remark}
\numberwithin{equation}{section}
\begin{document}
\author{Andrey Gogolev$^\ast$}
\title[Partially hyperbolic diffeomorphisms]{Partially hyperbolic diffeomorphisms with 
compact center foliations}
\thanks{$^\ast$This research was supported in part by the NSF grant DMS-1001610}
\begin{abstract}
Let $f\colon M\to M$ be a partially hyperbolic diffeomorphism such that all of its center leaves are compact.
We prove that Sullivan's example of a circle foliation that has arbitrary long leaves cannot be 
the center foliation of $f$.
This is proved by thorough study of the accessible boundaries of the center-stable and the center-unstable leaves.

Also we show that a finite cover of $f$ fibers over an Anosov toral automorphism if one of the following
conditions is met:
\begin{enumerate}
 \item the center foliation of $f$ has codimension 2, or
\item the center leaves of $f$ are simply connected leaves and the unstable foliation of $f$ is one-dimensional.
\end{enumerate}

\end{abstract}
\date{}
 \maketitle

\tableofcontents
\settocdepth{section}
\section{Introduction}

\subsection{}

Let $M$ be a smooth compact manifold without boundary. A \diff-sm
$f\colon M\to M$ is called {\it \ph}if there exists a \R metric on $M$ along with 
a $Df$-invariant continuous splitting $TM=\Es\oplus
\Ec\oplus\Eu$ of the tangent bundle such that for all $x\in M$ all unit vectors $v^\sigma \in E^\sigma(x)$,
 $\sigma=s, c, u,$ satisfy the following properties
\begin{multline*}
\qquad\|D_xf(v^s)\|< 1, \\
\qquad\shoveleft{\|D_xf(v^u)\|> 1, \hfill}\\
\qquad\shoveleft{\|D_xf(v^s)\|<\|D_xf(v^c)\|<\|D_xf(v^u)\|.\hfill}
\end{multline*}

It is known that there are $f$-invariant foliations $\Ws$ and $\Wu$,
the {\it stable} and {\it unstable} foliations, that are tangent to
the distributions $\Es$ and $\Eu$, respectively. In general, the
center distribution $\Ec$ does not necessarily integrate to a
foliation. In this paper we will study \ph diffeomorphisms whose {\it center
distribution $\Ec$ uniquely integrates to a foliation $\Wc$ with
compact leaves}. Foliations with all leaves compact are called {\it
compact}.

The following question was posed by Charles Pugh (Problem 48
in~\cite{RHRHU}).

\begin{q} Consider a \ph \diff-sm $f$ that has a compact
center foliation. Is it true that the volume of the center leaves is
uniformly bounded? Is it true that $f$ can be finitely covered by a
\ph \diff-sm $\tilde f \colon \widetilde M \to \widetilde M$ so that
there is a fibration $p \colon \widetilde M \to N$ whose fibers are
the center leaves and an Anosov \diff-sm $\bar f \colon N\to N$
such that $p$ is a semiconjugacy between $\tilde f$ and $\bar f$?
\end{q}
 We
have modified the question from~\cite{RHRHU} to accommodate some simple examples with
finite covers.

An affirmative answer to the Main Question would reduce the classification problem of \ph
diffeomorphisms with compact center foliations to the classification
problem of Anosov diffeomorphisms.
This paper provides positive results for \ph
diffeomorphisms under each of the following additional
assumptions:
\begin{enumerate}
\item[1)] the center foliation $\Wc$ has codimension $2$;
\item[2)] the leaves of $\Wc$ are simply connected;
\item[3)] $\dim E^c=1$, $\dim E^s \le 2$, $\dim E^u \le 2$.
\end{enumerate}

\subsection{} We can place the Main Question into a wider framework of the classification
problem. In general, the classification of \ph diffeomorphisms beyond dimension 3
seems to be a hopeless problem. However one can try to classify (dynamically coherent) \ph 
diffeomorphisms based on the properties of the center foliation.

The leaves of the stable and unstable foliations are all homeomorphic to Euclidean balls. The 
center foliation however displays a variety of behaviors and one can pose classification
questions under various assumptions on the center foliation. From this perspective the Main 
Question is probably the first question that comes to mind.
\begin{question}
 Assume that the center foliation $\Wc$ is a foliation with all leaves diffeomorphic to $\mathbb R$.
Is it true that a finite cover of $f$ is center conjugate to an affine map of $G/\Gamma$, 
$x\Gamma\mapsto g \cdot A(x)\Gamma$?
\end{question}
\begin{question}
 Assume that $\dim\Wc=1$ and that $\Wc$ has countable many leaves that
are diffeomorphic to $S^1$ and the rest diffeomorphic to $\mathbb R$. Is it
true that a finite cover of $f$ is center conjugate to a time-1 map of an Anosov flow?
\end{question}
\begin{question}
 Assume that $\dim\Wc=1$ and that the map induced on the space of center leaves is identity. Is it
true that a finite cover of $f$ is center conjugate to a time-1 map of an Anosov flow?
\end{question}
For positive results one may
start by introducing additional assumptions such as $\dim E^u=1$.

\subsection{Foliation theory perspective}
Compact foliations were studied for their own sake. Below is background to help better understand the Main Question.

\subsubsection{}\label{section_vol}
Given a compact foliation $\W$ of a compact manifold $M$ we can form
the {\it leaf space} $X$ by collapsing each leaf to a point. We
equip $X$ with the quotient topology, which makes it a compact
space.

A \R metric induces a \R volume on the leaves of $\W$ and hence a
volume function $vol\colon X \to (0, +\infty)$. Exact values of
$vol$ depend on the choice of the \R metric. However, the property
of $vol$ being bounded (or unbounded) is independent of such a
choice.

\begin{prop} \label{prop_hausdorff} 
Function $vol\colon X \to (0, +\infty)$ is lower semi-continuous and
it is continuous on an open dense subset of $X$. Function $vol$ is
bounded if and only if $X$ is Hausdorff.
\end{prop}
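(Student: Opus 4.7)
The plan is to establish the three claims in turn, all of them resting on local foliation charts near a compact leaf and, in the last step, on the dichotomy between finite and infinite holonomy.

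\emph{Lower semi-continuity.} Fix a leaf $L$ of volume $v$ and $\epsilon>0$. I would cover $L$ by finitely many foliation charts $U_1,\dots,U_N$ chosen so that $L\cap U_i$ is a single plaque $P_i$ and $\sum_i \mathrm{vol}(P_i)\geq v-\epsilon/2$. For any leaf $L'$ lying in a sufficiently small saturated tubular neighborhood of $L$, transverse continuity of the foliation charts forces $L'$ to meet each $U_i$ in at least one plaque that is $C^1$-close to $P_i$; summing these plaque volumes yields $\mathrm{vol}(L')\geq v-\epsilon$. What obstructs upgrading this to continuity is that $L'$ may meet some $U_i$ in many additional plaques, inflating $\mathrm{vol}(L')$.

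\emph{Continuity on a dense open subset.} I would invoke the Epstein--Millett--Tischler structure theorem for compact foliations: the set of leaves with trivial holonomy is a dense $G_\delta$ subset of $X$. At any such leaf $L_0$, Reeb's local stability produces a saturated neighborhood on which the foliation is a trivial product, so every nearby leaf is a single plaque $C^1$-close to $L_0$ and $\mathrm{vol}$ is therefore continuous at $[L_0]$.

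\emph{Boundedness iff Hausdorffness.} Both conditions will be linked to the absence of leaves with infinite holonomy. If some leaf $L$ carries a holonomy loop $\gamma$ whose holonomy $h$ is of infinite order on a transversal $T$ at $x\in L$, I would produce points $y_k\in T$ with $y_k\to x$ such that the $\langle h\rangle$-orbit of $y_k$ has cardinality $N_k\to\infty$. Each such orbit records $N_k$ returns of the leaf $L_{y_k}$ to $T$, and between consecutive returns $L_{y_k}$ follows a loop near $\gamma$ of definite length, so $\mathrm{vol}(L_{y_k})\geq c\cdot N_k\to\infty$, contradicting boundedness. The same configuration exhibits $[L_{y_k}]\to[L]$ while $L_{y_k}$ also accumulates on a distinct limit leaf, showing that $X$ fails to be Hausdorff at $[L]$. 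Conversely, if every leaf has finite holonomy, Reeb's local stability represents a saturated neighborhood of each leaf as a disk bundle over the quotient of $L$ by its finite holonomy group; on such a neighborhood $\mathrm{vol}$ is bounded and the image in $X$ is Hausdorff. A finite subcover of $X$ turns these local statements into the claimed global equivalence.

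The main obstacle is the last step: translating ``infinite holonomy'' into a quantitative unbounded-volume statement. One must verify that nearby leaves really are forced to return to a small foliation box so many times that their volumes diverge, rather than merely breaking into arbitrarily thin plaques whose total volume remains bounded. This quantification is the content of the Epstein--Millett analysis of the ``bad set'' of a compact foliation, and it is what makes the equivalence genuinely nontrivial.
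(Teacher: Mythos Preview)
The paper does not actually prove this proposition: immediately after the statement it refers the reader to Epstein's papers \cite{Ep, E}, noting only that the three-dimensional argument there adapts to higher dimensions. So there is no in-paper proof to compare your proposal against.

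That said, your outline is a correct sketch of the standard arguments, and you have correctly isolated the one genuinely hard implication (infinite holonomy forces unbounded volume). Two minor remarks. First, for the ``continuous on a dense set'' part, the paper itself later (Section~2.2) uses the more elementary observation that any lower semi-continuous function on a Baire space is continuous on a dense $G_\delta$; your Reeb-stability argument at trivial-holonomy leaves is then exactly what is needed to upgrade density to an \emph{open} dense set. Second, in your volume-growth step, to guarantee transversal points $y_k\to x$ whose $\langle h\rangle$-orbit has cardinality at least $N_k$, it is cleanest to use that $h^{N_k!}$ is nontrivial as a germ and choose $y_k$ not fixed by it; any orbit of size $m\leq N_k$ would satisfy $m\mid N_k!$ and hence $h^{N_k!}(y_k)=y_k$, a contradiction.
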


For a proof see, for example,~\cite{Ep, E}.  The proof of semi-continuity 
given in~\cite{Ep} is for the case $\dim M=3,\, \dim\W=1$. However, the argument can be 
adjusted to the higher dimensional setup with only minor modifications.

\begin{rmk}
 We will write $length$ instead
of $vol$ when $\dim\W=1$.
\end{rmk}

\subsubsection{} It is a corollary of the Reeb Stability Theorem
that a foliation with simply connected leaves is a fibration and, hence,
its $vol$ is bounded.

\subsubsection{}
It is not known whether a compact foliation with leaves of
arbitrarily large volume can have simply connected leaves (see
Question A.1.2 in~\cite{L}).

\subsubsection{}\label{sec_codim_2}
The Main Question is already very interesting in the case when the
center foliation is a foliation by circles. Sullivan~\cite{S} gave
 a beautiful example of a smooth foliation by circles of a
compact 5-manifold with arbitrarily long circles (hence the function $length$ is
unbounded). Later similar example was constructed~\cite{EV} on a
compact 4-manifold. This is optimal since Epstein~\cite{Ep} had
shown that lengths of the circles that foliate a 3-manifold are
uniformly bounded. Also Vogt~\cite{V} and Edwards, Millet,
Sullivan~\cite{EMS} had independently generalized Epstein's result
to codimension 2 foliations.

\subsection{Seifert fibrations}\label{Seifert}
Let $\W$ be a compact foliation on $M$. Associated to every leaf $\W(x)$ is the group $G_x(\W)$ of germs 
of holonomy homeomorphisms of a small transversal centered at $x$. We
say that $\W$ is a {\it Seifert fibration} if there are finitely many {\it exceptional
fibers} $\W(x)$ (leaves of $\W$) for which $G_x(\W)$ is a non-trivial finite group
isomorphic to a subgroup of the orthogonal group $O(\dim M-\dim \W)$ and the other leaves
have trivial holonomy group.

We also note that in the case when $G_x(\W)$ is finite it can actually be identified with a
group of homeomorphisms of a transversal about $x$.

\subsection{Partially hyperbolic skew products}
Let $X$ be a compact {\it topological manifold}, that is, a
Hausdorff second countable metric space with locally Euclidean
structure given by continuous charts. Let $M$ be a compact smooth
manifold and $p\colon M\to X$ be a locally trivial fibration such
that $\Wc\stackrel{\textup{def}}{=}\{p^{-1}(x), x\in X\}$ is a continuous
foliation with $C^1$ leaves (see Section~\ref{section_foliation} for the 
definition). A $C^1$ \ph \diff-sm of $M$ with
$\Ec\stackrel{\textup{def}}{=}T\Wc$ being the center distribution is called
{\it \ph skew product}.
Hence a \ph skew product $f$ fits into the commutative diagram:
$$
\begin{CD}
 M@>f>>M\\
@VpVV@VpVV\\
X@>\bar f>>X
\end{CD}
 $$
We say that $f$ {\it fibers over a hyperbolic automorphism} if the induced
homeomorphism $\bar f\colon X\to X$ is topologically conjugate to a 
hyperbolic automorphism.

The basic examples of \phsp are skew products over Anosov
diffeomorphisms with fiber expansion/contraction dominated by the
base expansion/contraction. Furthermore, by Hirsch-Pugh-Shub
structural stability for \ph diffeomorphisms, we can perturb these
basic examples in $C^1$ topology to get more examples of \ph skew
products. Examples where $M$ is a non-trivial fiber bundle are also
possible.

\subsection{Statements of results}

We say that a \ph \diff-sm is {\it dynamically coherent} if there
exists a foliation $\Wc$ tangent to $\Ec$ and any curve tangent to
$\Ec$ is contained in a leaf of $\Wc$.

Now we are ready to give the precise statements of our results.

\begin{theorem}\label{TA}
Let $f \colon M\to M$ be a $C^1$ \dc\ph \diff-sm with compact
center foliation $\Wc$. Assume that $\dim \Es=\dim
\Eu=1$. Then volumes of the center leaves are uniformly bounded and
$f$ admits a finite covering \diff-sm $\tilde f\colon \widetilde M
\to \widetilde M$ which is a \ph skew product. Moreover, $\tilde f$
fibers over a hyperbolic automorphism of the 2-torus.
\end{theorem}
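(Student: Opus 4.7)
The plan is to establish the three conclusions of Theorem~\ref{TA} in the order stated: uniform boundedness of center volumes, existence of a skew-product finite cover, and identification of the base with a hyperbolic automorphism of~$\T^2$. The codimension-two hypothesis $\dim\Es=\dim\Eu=1$ is central throughout: each center-stable leaf $\W^{cs}(x)$ and each center-unstable leaf $\W^{cu}(x)$ is a codimension-one sheet foliated by $\Wc$ with a one-dimensional transverse direction, namely the leaves of $\Ws$ and $\Wu$ restricted to these sheets.

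I expect the main obstacle to be the volume bound, so I would attack it first. By Proposition~\ref{prop_hausdorff} the bound is equivalent to Hausdorffness of the leaf space $X=M/\Wc$. Suppose for contradiction that there are distinct center leaves $\Wc(x)$ and $\Wc(y)$ that cannot be separated in $X$, \ie a sequence $z_n\to x$ with $\Wc(z_n)$ also accumulating on $\Wc(y)$. The plan is to pass to the (a priori non-Hausdorff) one-dimensional leaf space of $\W^{cs}(x)/\Wc$, and to study the accessible boundary of a small $\Wc$-saturated neighborhood of $\Wc(x)$ in $\W^{cs}(x)$. The one-dimensional transverse direction in $\W^{cs}$ comes with a natural local order near regular leaves, and the non-Hausdorff configuration should force the accessible boundary to contain incompatible branching. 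A symmetric analysis inside $\W^{cu}(x)$, combined with $f$-invariance and the domination of $\Ws$ and $\Wu$ by the center, is meant to promote this local picture to a global contradiction. This is precisely the step that rules out Sullivan-type long-leaf phenomena in the present setting, and it is the heart of the argument.

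Once $\mathrm{vol}$ is bounded, $X$ is a compact Hausdorff space and each holonomy group $G_x(\Wc)$ is finite. Because $\Wc$ has codimension two, a local Reeb stability argument identifies $G_x(\Wc)$ with a finite subgroup of $O(2)$ acting on a $2$-disc transversal, so $\Wc$ is a Seifert fibration in the sense of Section~\ref{Seifert}. A standard covering construction then produces a finite cover $q\colon\widetilde M\to M$ on which the pulled-back center foliation has trivial holonomy, and global Reeb stability upgrades it to a locally trivial fibration $p\colon\widetilde M\to N$ over a closed topological $2$-manifold $N$. The lift $\tilde f$ is \ph, and since $N$ inherits continuous Euclidean charts from the Seifert quotient, $\tilde f$ satisfies the definition of a \ph skew product over the induced homeomorphism $\bar f\colon N\to N$.

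Finally, the foliations $\Ws$ and $\Wu$ project to transverse continuous foliations on $N$ with local product structure, and $\bar f$ contracts one and expands the other. Hence $\bar f$ is an expansive homeomorphism of the closed surface $N$ carrying a topological Anosov splitting. Invoking the Hiraide--Lewowicz classification of expansive surface homeomorphisms, together with the fact that Anosov diffeomorphisms on surfaces live only on the $2$-torus, we conclude that $N\cong\T^2$ and $\bar f$ is topologically conjugate to a hyperbolic linear automorphism of $\T^2$, which is the desired conclusion.
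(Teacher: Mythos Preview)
Your outline misidentifies where the difficulty lies and, as a result, your Step~1 is only a vague plan rather than a proof. In the codimension-two situation there is an elementary observation that the paper exploits and that you overlook: pass \emph{first} to a finite cover on which $\Ws$, $\Wc$, $\Wu$ are orientable, and then argue directly that every center holonomy group is trivial. Indeed, the holonomy of $\Wc$ along a loop acts on a one-dimensional transversal inside $\Ws$ (or $\Wu$) as an orientation-preserving local homeomorphism of an interval fixing the base point; if it is not the identity, one of its semi-orbits $\{x_n\}$ consists of infinitely many distinct points in the transversal, and the center plaques $\Wc(x_n,\e)$ are pairwise disjoint pieces of a single compact center leaf, contradicting $vol(\Wc(x_1))<\infty$. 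This is the paper's Lemma~\ref{lemma_codim_1}, and together with Proposition~\ref{prop_product_holonomy} it gives trivial holonomy immediately. Reeb stability then yields the locally trivial fibration, and the uniform volume bound falls out for free by continuity over the compact base. No accessible-boundary machinery is needed here; that apparatus is reserved in the paper for the Main Theorem, where the transversals are two-dimensional and the argument is genuinely delicate.

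A second, smaller gap: your claim that a ``standard covering construction'' trivializes finite holonomy of a Seifert fibration is not automatic in general, and you do not justify it. The paper avoids this issue entirely because the holonomy is already trivial after the orienting cover. For the final step, you invoke the Hiraide--Lewowicz classification of expansive surface homeomorphisms; the paper instead proves that the quotient map is an Anosov homeomorphism in the sense of Section~4 (Proposition~\ref{phsp-Anhaueo}) and then applies topological versions of the Franks and Newhouse theorems (Theorems~\ref{codim_1_F} and~\ref{codim_1_N}). Either route should reach the hyperbolic toral automorphism, but note that the base is only a topological manifold with no a~priori smooth structure, so whichever classification you cite must be stated for homeomorphisms.
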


\begin{theorem}\label{TB}
Let $f$ be a $C^1$ \dc\ph \diff-sm with compact center foliation
$\Wc$. Assume that the leaves of $\Wc$ are simply
connected. Then volumes of center leaves are uniformly bounded and
$f$ is a \ph skew product.

Assume additionally that $\dim \Eu = 1$, then $f$ fibers over
a hyperbolic automorphism of the torus.
\end{theorem}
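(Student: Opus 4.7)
The plan is to derive the first assertion directly from the Reeb Stability Theorem, and then to reduce the second assertion to the classification of codimension-one topologically Anosov systems. Since $\Wc$ is a compact foliation of the compact manifold $M$ whose leaves are simply connected, the Reeb Stability Theorem gives that $\Wc$ is the fiber foliation of a locally trivial fibration $p\colon M\to X$, where the leaf space $X=M/\Wc$ is a compact Hausdorff topological manifold. By Proposition~\ref{prop_hausdorff} the volumes of the center leaves are therefore uniformly bounded. Because $f$ permutes center leaves, it descends to a homeomorphism $\bar f\colon X\to X$, and the resulting commutative diagram exhibits $f$ as a \ph skew product in the sense defined above. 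This settles the first statement.

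For the second statement, assume in addition that $\dim\Eu=1$. Dynamical coherence supplies $f$-invariant center-stable and center-unstable foliations $\W^{cs}$ and $\W^{cu}$, both of which are saturated by $\Wc$. Their images under $p$ are well-defined $\bar f$-invariant foliations $\bar\Ws$ and $\bar\Wu$ on $X$, with $\dim\bar\Ws=\dim\Es$ and $\dim\bar\Wu=1$. The uniform contraction along $\Ws$, the uniform expansion along $\Wu$ and the local product structure of the original foliations descend to make $\bar f$ a topologically Anosov homeomorphism of $X$: it is expansive, admits a local product structure, and is hyperbolic with respect to the transverse foliations $\bar\Ws$ and $\bar\Wu$. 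Since $\dim\bar\Wu=1$, $\bar f$ is codimension-one topologically Anosov, so the theorems of Franks and Newhouse in the topological form established by Hiraide imply that $X$ is homeomorphic to a torus and $\bar f$ is topologically conjugate to a hyperbolic toral automorphism.

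The principal obstacle is checking carefully that the descended system $(X,\bar f,\bar\Ws,\bar\Wu)$ really does satisfy the hypotheses of the topological Anosov classification theorem. In particular, expansivity of $\bar f$ is not a formal consequence of the partial hyperbolicity of $f$: two points in distinct fibers may stay close in $M$ without lying on any common stable or unstable leaf, so one must use compactness of $X$ and the uniform boundedness of holonomies of the fibration $p$ to translate closeness in $X$ back to closeness along $\W^{cs}$ or $\W^{cu}$ in $M$. The verification that the projected foliations $\bar\Ws$ and $\bar\Wu$ enjoy a genuine local product structure on $X$ requires the same uniformity argument.
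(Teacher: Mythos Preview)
Your proposal is correct and follows the same route as the paper: Reeb Stability yields the fibration and the skew-product structure, and then the quotient homeomorphism is identified as a codimension-one Anosov homeomorphism to which the topological Franks--Newhouse theorems apply. The ``principal obstacle'' you flag---verifying that $(X,\bar f)$ really is an Anosov homeomorphism with stable and unstable foliations---is precisely the content of Proposition~\ref{phsp-Anhaueo}, which the paper proves in advance (via the auxiliary metric $d^{scu}$ and properties (AH1)--(AH2)) and then simply cites, so you may invoke it directly rather than re-deriving expansivity and local product structure by hand.
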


The following is our main result.
\begin{TC}
Let $f$ be a $C^1$ \dc\ph \diff-sm with compact center foliation
$\Wc$. Assume that $\dim \Eu \le 2$, $\dim \Es \le 2$
and $\dim \Ec =1$ so that $\Wc$ is a foliation by circles. Then
the lengths of the center leaves are uniformly bounded. Moreover,
every center leaf has a finite holonomy group.
\end{TC}

\begin{question}
 It is not clear to us whether the assumption $\dim \Ec =1$ is crucial for our approach to work.
Can one generalize our techniques to higher dimensional center foliation?
\end{question}

Based on the Main Theorem and a theorem of Bohnet on codimension 4
finite holonomy center foliations~\cite[Theorem 2.64, pp. 116-117]{Boh}, it is easy to establish the following.
\begin{cor}\label{cor_seifert}
Let $f$ be as in the Main Theorem. Then there is a finite cover $\tilde f\colon\widetilde M\to\widetilde M$
with center foliation $\widetilde{\W}^c$ such that $\widetilde {\W}^c$ is a Seifert fibration
on $M$. The holonomy groups of exceptional fibers are products of two cyclic groups.
\end{cor}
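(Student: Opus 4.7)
The plan is to combine the Main Theorem with the cited theorem of Bohnet and then extract the refined structure of the holonomy groups from the partially hyperbolic hypothesis. By the Main Theorem each center leaf has a finite holonomy group and, by Proposition~\ref{prop_hausdorff}, the boundedness of lengths ensures that the leaf space $M/\Wc$ is Hausdorff. Since the codimension of $\Wc$ equals $\dim\Es+\dim\Eu\le 4$, the hypotheses of \cite[Theorem~2.64]{Boh} are met. Applying that theorem yields a finite cover of $M$ on which the lifted center foliation is a Seifert fibration.

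To pin down the holonomy groups of the exceptional fibers, recall that in a Seifert fibration each such group embeds as a finite subgroup of $O(\dim M-1)$ acting on a small transversal. Because the lift of $f$ is partially hyperbolic with the lifted $\Wc$ as its center foliation, holonomy transport along a center circle preserves the invariant splitting $\Es\oplus\Eu$ on transversals. Averaging a Riemannian metric under the finite holonomy action, we may realize each holonomy group as acting orthogonally and respecting the splitting, so that it embeds into $O(\dim\Es)\times O(\dim\Eu)$.

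Finally, pass to a further cover of degree at most $4$ on which both $\Es$ and $\Eu$ are orientable; this preserves the Seifert structure since the new covering group acts freely on leaves. The holonomy groups of the composite cover then lie in $SO(\dim\Es)\times SO(\dim\Eu)\subseteq SO(2)\times SO(2)\cong\T^2$. Every finite subgroup of $\T^2$ has at most two generators and is therefore isomorphic to a product of at most two cyclic groups, which is the conclusion we want. The main obstacle is the careful bookkeeping of the two successive finite covers and checking that the orientation cover does not disturb the Seifert structure produced by Bohnet; once that is handled, the rest is a standard appeal to the structure theory of finite subgroups of tori.
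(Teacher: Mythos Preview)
Your argument follows exactly the route the paper indicates: the paper gives no detailed proof of this corollary, deferring to the Main Theorem together with Bohnet's Theorem~2.64, and the surrounding remarks make clear that the finite cover in question is the one orienting $\Ws$ and $\Wu$. Using Proposition~\ref{prop_product_holonomy} to split the holonomy and then forcing each factor to be cyclic via orientability is the intended mechanism.

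Two technical points in your write-up need adjustment. First, your metric-averaging step presumes the holonomy acts smoothly, whereas the center foliation is only transversally $C^0$ and its holonomy consists of germs of homeomorphisms; one cannot average a Riemannian metric under such an action. The conclusion survives without it: by Proposition~\ref{prop_product_holonomy} each factor $G_x^\sigma$ is already a finite group acting on a disk of dimension at most $2$, and once $\W^\sigma$ is oriented this factor is trivial in dimension $1$ (the argument of Lemma~\ref{lemma_codim_1}) and cyclic in dimension $2$ (Ker\'ekj\'art\'o's theorem on finite groups of plane homeomorphisms fixing a point). So the detour through finite subgroups of $\T^2$ is unnecessary --- the product-of-two-cyclics structure comes directly from the product decomposition of the holonomy. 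Second, the deck group of the orientation cover need not act freely on center leaves --- a lifted circle can be setwise invariant under a nontrivial deck transformation --- but this is irrelevant: the Seifert structure persists under any finite cover because holonomy groups can only shrink under lifting and the finitely many exceptional fibers have finitely many preimages.
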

\begin{question}
In Corollary~\ref{cor_seifert} passing to a finite cover is needed in order to orient $\Wu$ and $\Ws$.  Is it possible to eliminate exceptional fibers by passing to further finite covers? More generally, can one have
a center foliation which is a Seifert fibration with exceptional fibers such that there is no finite cover in which  
the exceptional fibers disappear?
\end{question}

Examples in~\cite{Boh} show that it is necessary to pass to a finite cover that orients $\Wu$ and $\Ws$, otherwise
one might have infinitely many leaves with non-trivial finite holonomy.

\begin{remark}
\begin{enumerate}
\item[]
\item Passing to a finite cover in Theorem~\ref{TA} is only needed to make
sure that foliations $\Ws$, $\Wc$ and $\Wu$ are orientable. For an
example of a \ph skew product with non-orientable foliations see~\cite{BW}. In this
example the center foliation is a Seifert fibration over the
``pillow-case'' orbi\-fold.
\item The first part of Theorem~\ref{TA}  for 3-dimensional manifolds is a corollary of
results from~\cite{BW}.
\item The center foliation $\Wc$ in Theorem~\ref{TA} is a codimension 2 foliation. One may 
wonder if the volumes of the center leaves are uniformly bounded without making any
dynamical assumptions. As discussed in~\ref{sec_codim_2} this is indeed the case.
However, the results in~\cite{EMS, V} use  $C^1$-smoothness
of the foliation in a crucial way. Therefore we cannot apply them in
our setting since the center foliation is only continuous
transversally. 
\item Note that in Theorem~\ref{TA} we do not assume that the leaves
of the center foliation are all homeomorphic. In fact, in the setup of
Theorem~\ref{TA} the center leaves for $f$ are not necessarily
homeomorphic. (The center leaves for $\tilde f$ are, of course,
homeomorphic.)
\item The first part of Theorem~\ref{TB} is actually immediate from 
Reeb Stability Theorem.
\item Most of our arguments in the proof of the Main Theorem work for stable and unstable foliations of arbitrary
dimension. The assumption on dimension is only used at the end of
Section~\ref{section_boundary} in Proposition~\ref{prop_C}.
\end{enumerate}
\end{remark}

\subsection{Work of Bohnet and Carrasco} In recent theses by Doris Bohnet~\cite{Boh} and Pablo Carrasco~\cite{Car}
the same topic was pursued independently of each other and of our work. There is a certain overlap in results established.
In particular, Theorem~\ref{TA} is also established in both theses. Moreover, it is shown in~\cite{Boh}
that $\widetilde M$ from Theorem~\ref{TA} is actually a two or one fold cover and the center foliation of $f$ is a 
Seifert fibration with either 4 or 0 exceptional fibers whose holonomy groups has two elements $\{id,-id\}$.
A similar description is provided in~\cite{Boh} for codimension 1 partially hyperbolic diffeomorphisms 
whose compact center foliation has finite holonomy. This generalizes our Theorem~\ref{TB} to other foliations 
with finite holonomy (whose leaves are not necessarily simply connected).

\subsection{Acknowledgments}
The author would like to thank Federico Rodriguez Hertz, Andr\'e Henriques and Elmar Vogt for useful communications. The author is very grateful to Pablo Carrasco who pointed out an error in the original proof of the Main Theorem.
Also the author is grateful to Doris Bohnet who kindly sent her thesis to the author.

\settocdepth{subsection}

\section{Preliminaries from foliation theory}\label{section_foliation}
Here we collect well known results on foliations that we will need. We will consider continuous
foliations with $C^1$ leaves. A foliation $\W$ is a {\it continuous foliation with $C^1$ leaves}
if it is given by continuous charts, the leaves $\W(x)$, $x\in M$, are $C^1$ immersed submanifolds and the
tangent space $T_x\W(x)$ depends continuously on $x\in M$.
\subsection{Reeb stability and foliations with simply connected leaves}
Let $\W$ be a continuous foliation with $C^1$ leaves
on a compact manifold $M$. Then given a point $x$ one can define the germinal
holonomy group $G_x(\W)$ that consists of germs of holonomy homeomorphisms
of a small transversal  centered at $x$. One also has a surjective
homomorphism
$$
h^c\colon\pi_1(\W(x),x)\to G_x(\W).
$$
\begin{theorem}[Reeb Stability]
 If a compact leaf $\W(x)$ has trivial holonomy $G_x(\W)$ then
there is a \nbhd  of $\W(x)$ in $M$ that is a union of leaves that are homeomorphic
to $\W(x)$.
\end{theorem}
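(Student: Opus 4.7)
The plan is to use compactness of $L := \W(x)$ together with triviality of $G_x(\W)$ to produce a saturated tubular neighborhood of $L$ in which every nearby leaf projects homeomorphically onto $L$.

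First, using compactness of $L$, I would choose a finite cover $\{U_1, \ldots, U_N\}$ of $L$ by foliation charts, thin enough that each $U_i$ meets $L$ in a single plaque $P_i$, together with transversals $T_i \subset U_i$ through fixed basepoints $x_i \in P_i$ and with $T_1$ centered at $x$. For each overlap $U_i \cap U_j$ with $P_i \cap P_j \ne \emptyset$, the local product structure yields a homeomorphism between subdisks of $T_i$ and $T_j$. Any loop in $L$ at $x$ is homotopic to a plaque chain in this cover, so the image of $\pi_1(L,x)$ in $G_x(\W)$ is generated by the finitely many plaque chain loops associated to the cover. Since $G_x(\W)$ is trivial, each of these finitely many holonomy germs at $x$ is the identity germ, and by intersecting their finitely many domains I obtain a common transversal neighborhood $T \subset T_1$ of $x$ on which the holonomy around every plaque chain loop restricts to the identity.

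Next, I would saturate $T$ by the foliation. For each $y \in T$, lifting plaque chains of $L$ based at $x$ to plaque chains based at $y$ in the nearby leaves yields a continuous map $\varphi_y \colon L \to \W(y)$; triviality of holonomy on $T$ ensures that $\varphi_y$ is well defined independently of the chosen chain, and the local product structure in each $U_i$ makes it a local homeomorphism. A symmetric construction, lifting plaque chains in $\W(y)$ back to chains in $L$ starting at $x$, produces a continuous inverse, because any two lifts of a common path through $T$ must agree by the identity-on-$T$ property. Hence $\varphi_y$ is a homeomorphism and $\W(y)$ is compact. The union $V := \bigcup_{y \in T} \W(y)$ is open in $M$ because each $U_i$ contributes an open saturated tube around $P_i$, so $V$ is the desired neighborhood.

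The main obstacle is the last sentence of the first paragraph: passing from the germinal triviality $G_x(\W) = \{1\}$, which a priori only asserts triviality of each individual holonomy germ on its own neighborhood, to uniform triviality on a single fixed transversal. Compactness of $L$ is what makes this work, since only finitely many plaque chain loops are needed to generate the image of $\pi_1(L,x)$ in $G_x(\W)$ once a finite good cover has been fixed, so the finite intersection of their domains is still a neighborhood of $x$ in $T_1$.
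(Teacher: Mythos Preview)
The paper does not give its own proof of Reeb Stability; it is stated as a preliminary result and the reader is referred to Candel--Conlon~\cite{CC}. So there is no in-paper argument to compare against.

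Your outline is the standard proof and is essentially correct. One point deserves a sharper formulation: the ``symmetric construction'' of the inverse is a little loose, since at that stage you have not yet established that $\W(y)$ is compact or that its holonomy is trivial, so lifting plaque chains of $\W(y)$ back to $L$ is not obviously well defined. The cleaner route is to observe that $\varphi_y\colon L\to \W(y)$ is a local homeomorphism with compact domain, hence a finite covering of its (open and closed, therefore all of $\W(y)$) image; then the triviality of the holonomy on $T$ forces $\varphi_y^{-1}(y)=\{x\}$, so the covering has degree one and $\varphi_y$ is a homeomorphism. Your identification of the key point---that compactness of $L$ yields finitely many generating plaque-chain loops, allowing one to pass from germinal triviality to triviality on a fixed transversal---is exactly right.
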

\begin{cor}\label{cor_Reeb}
 If $\W(x)$ is a compact simply connected leaf then there is a $\W$-saturated \nbhd
  of $\W(x)$ in $M$ that is homeomorphic to $T\times \W(x)$ via a homeomorphism
that takes the leaves of $\W$ to the fibers $\{\cdot\}\times\W(x)$.
\end{cor}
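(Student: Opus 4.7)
The plan is to deduce the corollary directly from Reeb Stability together with a holonomy-lifting construction. Since $\W(x)$ is simply connected, $\pi_1(\W(x),x)$ is trivial, and the surjection $h^c\colon \pi_1(\W(x),x)\to G_x(\W)$ forces $G_x(\W)$ to be trivial. The Reeb Stability Theorem then produces a $\W$-saturated open neighborhood $V$ of $\W(x)$ in $M$ in which every leaf is compact and homeomorphic to $\W(x)$.

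Next, pick a small transversal $T$ through $x$ with $T\subset V$. To build the product structure, I would cover the compact leaf $\W(x)$ by finitely many foliation charts $U_1,\dots,U_n$, each carrying a local product structure of the form ``transversal $\times$ plaque''. After shrinking $T$, one can assume that for every $t\in T$ and every $i$, the leaf through $t$ meets $U_i$ in a unique plaque $P_i(t)$ varying continuously in $t$, and that on each overlap $U_i\cap U_j$ the plaques $P_i(t)$ and $P_j(t)$ coincide.

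Given this setup, I would define $\Phi\colon T\times \W(x)\to V$ by path-lifting: for $y\in \W(x)$, choose a path $\gamma$ in $\W(x)$ from $x$ to $y$, subdivide it so that each sub-arc lies in one of the $U_i$, and lift each sub-arc inside the corresponding $P_i(t)$. Set $\Phi(t,y)$ to be the endpoint of this concatenated lift. The delicate point is well-definedness, i.e.\ independence of $\gamma$. This is where simple connectedness enters essentially: any two paths from $x$ to $y$ are homotopic, and such a homotopy can be covered by a finite grid whose faces lie in foliation charts; triviality of $G_x(\W)$ at each grid vertex propagates through the grid to yield equality of endpoints of the two lifts.

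Finally, one verifies that $\Phi$ is continuous (by continuous variation of plaques in $t$), injective (distinct leaves of $\W$ cannot intersect), and open onto a $\W$-saturated neighborhood of $\W(x)$, sending each fiber $\{t\}\times \W(x)$ homeomorphically onto the leaf through $t$. The main technical obstacle, as already indicated, is precisely the path-independence of the holonomy lift; the remaining verifications are routine compactness and local-chart arguments.
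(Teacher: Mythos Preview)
Your argument is correct and is the standard deduction: simple connectedness of $\W(x)$ trivializes $\pi_1(\W(x),x)$, hence $G_x(\W)$ via the surjection $h^c$, and then Reeb Stability together with the holonomy path-lifting you describe yields the product neighborhood. The paper does not give its own proof of this corollary; it is recorded as an immediate consequence of Reeb Stability, with the details deferred to the reference~\cite{CC}, where one finds precisely the construction you outline.
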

\begin{theorem}[Generalized Reeb Stability]
If a compact leaf $\W(x)$ has a finite holonomy group $G_x(\W)$ then there is an arbitrarily small
foliated normal neighborhood $V$ of $\W(x)$ and a projection $p\colon V\to\W(x)$ such that $(V,\W|_V,p)$
is a foliated bundle with all leaves compact. Furthermore, each leaf $\W(y)\subset V$ has finite
holonomy group of order at most $|G_x(\W)|$ and the covering $p|_\W(y)\colon\W(y)\to\W(x)$ has $k$ sheets, 
where $k\le|G_x(\W)|$.
\end{theorem}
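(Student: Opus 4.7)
The plan is to reduce to the ordinary Reeb Stability Theorem by passing to the finite cover of $\W(x)$ associated to the kernel of $h^c$. On this cover the leaf has trivial holonomy, so Reeb Stability produces a product foliated neighborhood upstairs; descending by the deck action yields the desired foliated bundle structure on a neighborhood of $\W(x)$ in $M$.

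Concretely, first set $K:=\ker h^c\subset\pi_1(\W(x),x)$, which has finite index $|G_x(\W)|$ since $G_x(\W)$ is finite. Let $q\colon\hat L\to\W(x)$ be the corresponding finite regular cover, whose deck group is naturally identified with $G_x(\W)$. Choose a small embedded transversal $T$ to $\W$ at $x$; because $G_x(\W)$ is finite, after shrinking $T$ the germinal action on $T$ can be realized by honest self-homeomorphisms of $T$ fixing $x$, giving a faithful topological action of $G_x(\W)$ on $T$.

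Next, form the local model $N:=(\hat L\times T)/G_x(\W)$, where $G_x(\W)$ acts diagonally: by deck transformations on $\hat L$ and by the holonomy action on $T$. The horizontal foliation $\{\hat L\times\{t\}\}$ descends to a foliation $\W_N$ of $N$, equipped with a natural projection $\pi\colon N\to\W(x)$ induced by the first factor. The heart of the argument is to produce a foliated homeomorphism from a small neighborhood $V$ of $\W(x)$ in $M$ onto $N$: cover $\W(x)$ by finitely many foliated charts, glue the local product structures by monodromy along paths in $\W(x)$, and observe that the cocycle obstruction to a global gluing is exactly the homomorphism $h^c$, which the passage to $\hat L$ trivializes.

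Once the identification $V\cong N$ is in hand the conclusions are immediate. Each leaf in $V$ corresponds to a $G_x(\W)$-orbit of sheets in $N$ and takes the form $(\hat L\times G_x(\W)\cdot t)/G_x(\W)$, which is a regular $k$-sheeted cover of $\W(x)$ with $k=[G_x(\W):\mathrm{Stab}_G(t)]\le|G_x(\W)|$; in particular it is compact, and its holonomy group injects into $\mathrm{Stab}_G(t)$, so has order at most $|G_x(\W)|$. The projection $p:=\pi$ then provides the desired foliated bundle structure. The main obstacle is the construction of the foliated homeomorphism $V\cong N$, which is where foliation structure (rather than purely germinal holonomy) enters and requires patching charts coherently along loops in $\W(x)$; this works precisely because $h^c$ captures all of the germinal holonomy at $\W(x)$.
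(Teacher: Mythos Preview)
The paper does not prove this statement at all: it is quoted as a standard result from foliation theory, with the reader referred to \cite{CC} for a proof. Your sketch is essentially the classical argument given there (build the suspension $(\hat L\times T)/G_x(\W)$ and identify it with a saturated tubular neighborhood of $\W(x)$), so there is nothing to compare against in the paper itself.

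One small slip: the covering $p|_{\W(y)}\colon\W(y)\to\W(x)$ corresponds to $\hat L/\mathrm{Stab}_G(t)\to\hat L/G$, which need not be \emph{regular} unless $\mathrm{Stab}_G(t)$ is normal in $G_x(\W)$. This does not affect the stated conclusions (compactness, the bound $k\le|G_x(\W)|$, and the bound on the holonomy of $\W(y)$), but you should drop the word ``regular''.
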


We refer to the book~\cite{CC} for a detailed discussion and the proofs of the above results.

\subsection{The bad set of a compact foliation}
Consider a compact foliation $\W$ on a manifold $M$, the leaf space $X$ and
the volume function $vol\colon X\to (0,+\infty)$ as in~\ref{section_vol}.

Define {\it the bad set} 
$$
\EuScript B\stackrel{\mathrm{def}}{=}\{x\in M: vol \;\;\mbox{is not locally bounded at}\;\W(x)\}.
$$  
Clearly $\EuScript B$ is compact. Also consider the set $\EuScript B'\supset\EuScript B$ --- the set 
of points at which $vol$ is not continuous.
Recall that by Proposition~\ref{prop_hausdorff} $vol$ is lower semi-continuous. 
Any lower semi-continuous 
function is the limit of an increasing sequence of continuous functions and the Baire Category Theorem
implies that $\EuScript B'$ has empty interior. Thus $\EuScript B$ is a compact set with empty interior.

\section{Preliminaries from \ph dynamics}
In this section we collect various preparatory results, which apply
to a wider class of \ph diffeomorphisms than just \ph diffeomorphisms
with compact center foliations.

\subsection{Notation}
Define $E^{cs}=E^c\oplus E^s$ and $E^{cu}=E^c\oplus E^u$.

We write $\Ws$, $\Wu$ and $\Wc$ for the stable, unstable and center foliations, that is,
foliations tangent to $E^s$, $E^u$ and $E^c$ respectively. These are continuous foliations with $C^1$ leaves.
It is 
known that the leaves $\Ws(x), \Wu(x)$, $x\in M$, are diffeomorphic to $\mathbb R^{\dim E^s}$
and $\mathbb R^{\dim E^u}$ respectively. Define the {\it local leaves}
$$
\W^\sigma(x,\e)\stackrel{\mathrm{def}}{=}\{y\in\W^\sigma(x):  d^\sigma(y,x)<\e\},\;\;\; \sigma=s, u, c,
$$ 
where $d^\sigma$ is the metric induced on the leaves of $\W^\sigma$ by the Riemannian metric. If $\e>0$
is small we will sometimes refer to the local leaves as {\it plaques}.
\subsection{The center-stable and center-unstable leaves}
\begin{prop}\label{prop_cs_local}
Let $f$ be a \ph \diff-sm and let $B^c$ be a  ball tangent to
$\Ec$ at every point. Then for any $\varepsilon >0$ the set
$$
\W^\sigma(B^c,\e)\stackrel{\textup{def}}{=}\bigcup_{y\in
B^c}\W^\sigma(y,\varepsilon), \; \sigma = s, u,
$$
is a $C^1$ immersed submanifold tangent to $E^{c\sigma}$, that is,
for every $x\in \W^\sigma(B^c,\e)$  $T_x\W^\sigma(B^c)=E^{c\sigma}$. If
$\varepsilon$ is sufficiently small then $\W^\sigma(B^c,\e)$ is
injectively immersed.
\end{prop}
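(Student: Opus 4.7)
The plan is to work locally around an arbitrary point $x_0\in B^c$ and exhibit $\W^\sigma(B^c,\e)$ as a $C^1$ graph over a piece of $E^{c\sigma}(x_0)$. I handle $\sigma=s$; the unstable case is symmetric. I choose smooth coordinates on a small \nbhd $U$ of $x_0$ adapted to the splitting $\Es(x_0)\oplus \Ec(x_0)\oplus \Eu(x_0)$. By continuity of the three distributions, after shrinking $U$ every $E^\tau$ (and hence $E^{cs}=\Ec\oplus \Es$) remains a graph of a small-slope linear map over $E^\tau(x_0)$ at every point of $U$. In these coordinates $B^c\cap U$ is a $C^1$ graph over an open disk in $\Ec(x_0)$ because $B^c$ is $C^1$ and tangent to $\Ec$, and each local stable plaque $\Ws(y,\e)$ through $y\in B^c\cap U$ is a $C^1$ graph over a disk in $\Es(x_0)$. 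A standard regularity fact for the stable foliation of a \ph \diff-sm is that these plaques vary continuously in $y$ in the $C^1$-topology.

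Next I introduce the continuous parametrization
\[
\Phi\colon (B^c\cap U)\times D^s\longrightarrow M,\qquad \Phi(y,s)=\text{the point of }\Ws(y,\e)\text{ whose }\Es(x_0)\text{-coordinate equals }s,
\]
where $D^s\subset \Es(x_0)$ is a small disk. Its image is $\Ws(B^c,\e)\cap U$. For $\e$ sufficiently small, distinct stable plaques in $U$ are disjoint, so $\Phi$ is injective; this gives the injective-immersion claim at the end of the proposition.

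The main obstacle is upgrading this continuous bijective parametrization to a $C^1$ submanifold structure with tangent plane equal to $E^{cs}$ at every point. Half of the tangent plane is immediate: at any $x=\Phi(y,s)$ one has $\Es(x)\subset T_x\Ws(y,\e)\subset T_x(\operatorname{im}\Phi)$ because the leaf $\Ws(y,\e)$ is $C^1$. The $\Ec(x)$-directions cannot be recovered by naive differentiation of $\Phi$ in the $y$-slot, because the transversal dependence $y\mapsto\Ws(y,\e)$ is only continuous. I would instead use a narrow-cone argument: inside $U$ every tangent vector to $B^c$ and to every stable plaque lies in an arbitrarily narrow cone $K^{cs}$ around $E^{cs}(x_0)$, so $\Ws(B^c,\e)\cap U$ projects bijectively onto an open subset of $E^{cs}(x_0)$ with uniformly small Lipschitz slope and is thus a Lipschitz graph. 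The $C^1$ regularity and the identification $T_x\Ws(B^c,\e)=E^{cs}(x)$ then follow from continuity of the distribution $E^{cs}$: the tangent plane at each $x$ is squeezed into an arbitrarily narrow cone around $E^{cs}(x)$, already contains $\Es(x)$, and has the correct dimension $\dim E^{cs}$, so it must coincide with $E^{cs}(x)$. Making this final extraction precise — turning the cone constraint plus continuity of $E^{cs}$ into pointwise differentiability with a continuously varying tangent plane — is the technical heart of the argument.
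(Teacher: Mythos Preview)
The paper does not actually prove this proposition: it simply cites Proposition~3.4 of~\cite{BBI} and remarks that the low-dimensional argument there extends straightforwardly. So there is no ``paper's own proof'' to compare against beyond that reference.

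Your sketch is essentially the standard argument one finds in~\cite{BBI}: adapted local coordinates, the observation that $B^c$ and the stable plaques are $C^1$ graphs with small slope over $E^{cs}(x_0)$, hence their union is a Lipschitz graph, and then an upgrade to $C^1$ via the cone field around the continuous distribution $E^{cs}$. You correctly flag the only nontrivial step --- passing from ``Lipschitz graph whose secants lie in arbitrarily narrow cones about the continuous plane field $E^{cs}$'' to ``$C^1$ with tangent exactly $E^{cs}$'' --- as the technical heart. The precise mechanism is that for $p,p'$ close, the secant $g(p')-g(p)$ is trapped in a cone about $E^{cs}$ at \emph{every} intermediate point, and continuity of $E^{cs}$ then forces $g(p')-g(p)-A(p)(p'-p)=o(|p'-p|)$ uniformly, where $A(p)$ is the linear map with graph $E^{cs}$ at $(p,g(p))$; this is exactly differentiability with continuous derivative $A$. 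One small addendum: your injectivity argument is stated locally in $U$, whereas the proposition asserts global injectivity of the immersion for small $\varepsilon$; this needs a compactness argument over the ball $B^c$ to make the choice of $\varepsilon$ uniform.
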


For a proof see, \eg Proposition 3.4 in~\cite{BBI}. They considered
the low dimensional situation, but the proof extends to higher
dimension straightforwardly.

\begin{cor}\label{cor_cs_epsilon}
Given a complete center leaf $\E$ of a \ph \diff-sm and
$\varepsilon>0$ define
\begin{equation}\label{cu_nbhd}
\W^\sigma(\E, \varepsilon)\stackrel{\mathrm{def}}{=}\bigcup_{y\in
\E}\W^\sigma(y,\varepsilon), \; \sigma = s, u.
\end{equation}
Then $\W^\sigma(\E, \varepsilon)$ is a $C^1$ immersed submanifold
tangent to $E^{c\sigma}$.
\end{cor}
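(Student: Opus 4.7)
The plan is to reduce this global statement to the local Proposition~\ref{prop_cs_local} by a covering-and-gluing argument.

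First, I would cover the complete center leaf $\E$ by open balls $B^c_\alpha\subset\E$. Since each $B^c_\alpha$ lies in a leaf of $\Wc$, it is tangent to $\Ec$ at every point, so Proposition~\ref{prop_cs_local} applies and gives that each $\W^\sigma(B^c_\alpha,\e)$ is a $C^1$ immersed submanifold tangent to $E^{c\sigma}$. By construction,
$$
\W^\sigma(\E,\e)=\bigcup_\alpha \W^\sigma(B^c_\alpha,\e),
$$
so set-theoretically the object of interest is already a union of local pieces to which the local result directly applies.

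Next, I would verify that these local pieces carry compatible $C^1$ structures. Given any point $x$ in an overlap $\W^\sigma(B^c_\alpha,\e)\cap\W^\sigma(B^c_\beta,\e)$, I would pick a sufficiently small ambient neighborhood of $x$ in $M$. The local injectivity clause in Proposition~\ref{prop_cs_local}, applied with a smaller auxiliary parameter, guarantees that every point of $\W^\sigma(\E,\e)$ in such a neighborhood has a unique footpoint on $\E$. Consequently the intersection of this neighborhood with either $\W^\sigma(B^c_\alpha,\e)$ or $\W^\sigma(B^c_\beta,\e)$ has the same set-theoretic description $\bigcup_{y\in U}\W^\sigma(y,\e')$ for some small $U\subset\E$ and $\e'<\e$, so the two candidate $C^1$ structures coincide near $x$, with common tangent space $E^{c\sigma}(x)$.

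Combining the two steps, the local $C^1$ immersed submanifold structures supplied by Proposition~\ref{prop_cs_local} assemble into a single $C^1$ immersed submanifold structure on $\W^\sigma(\E,\e)$ tangent to $E^{c\sigma}$. I would emphasize that for the $\e$ fixed in the statement one only obtains an immersion and not an embedding: points that are far apart along $\E$ may have $\e$-stable or $\e$-unstable leaves that re-intersect in $M$. The main, and rather mild, obstacle is ensuring the compatibility of $C^1$ structures on overlaps; once the local injectivity provided by Proposition~\ref{prop_cs_local} is invoked, each point of $\W^\sigma(\E,\e)$ receives a canonical local description in terms of $\E$ and the stable or unstable leaves, making the gluing essentially automatic.
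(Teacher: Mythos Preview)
Your proposal is correct and is precisely the natural elaboration of what the paper leaves implicit: the paper states this as an immediate corollary of Proposition~\ref{prop_cs_local} with no proof, and your cover-and-glue argument is exactly how one spells out that immediacy. The only cosmetic point is that the corollary does not presuppose a center foliation $\Wc$, so instead of saying each $B^c_\alpha$ ``lies in a leaf of $\Wc$'' you should simply say it is tangent to $E^c$ because $\E$ is; otherwise the argument is fine.
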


Next we show that $\W^\sigma(\E, \varepsilon)$ is also ``foliated"
by the local center leaves in the sense which is made precise below in Proposition~\ref{prop_local_center_leaves}. 

We say that $E^c$ is {\it weakly integrable} if for every
$ x\in M$ there is an immersed complete $C^1$ manifold $\Wc(x)$ which contains $x$
 and is  tangent to $E^c$ everywhere, that is, $T_y\Wc(x) = E(y)$ for
each $y\in\Wc(x)$.
\begin{prop}\label{prop_local_center_leaves} Assume that $\Ec$ is weakly integrable.
Given a complete center leaf $\E$ of a \ph \diff-sm define
$\W^\sigma(\E, \varepsilon)$ by~\eqref{cu_nbhd}. Then for every
$x\in\W^\sigma(\E, \varepsilon)$ there exists a ball $B^c
\subset\W^\sigma(\E, \varepsilon)$, that contains $x$ and is tangent to $\Ec$ at every point.
\end{prop}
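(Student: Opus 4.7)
The plan is to produce $B^c$ as a small plaque of a center leaf through $x$ (furnished by weak integrability of $E^c$) and then to show that $B^c \subset \W^\sigma(\E,\varepsilon)$ by comparing two $c\sigma$-manifolds built from Proposition~\ref{prop_cs_local}.

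First I would fix $y \in \E$ with $x \in \W^\sigma(y,\varepsilon)$ and set $\eta := \varepsilon - d^\sigma(x,y) > 0$. By weak integrability, let $\Wc(x)$ be a center leaf through $x$ and set $B^c := \Wc(x,\delta)$ for $\delta > 0$ small, so that $B^c$ is tangent to $E^c$ by construction. Proposition~\ref{prop_cs_local} applied to $B^c$ yields a $C^1$ injectively immersed submanifold $N_1 := \W^\sigma(B^c,\eta/2)$ tangent to $E^{c\sigma}$, while Corollary~\ref{cor_cs_epsilon} applied to $\E$ gives a $C^1$ immersed submanifold $N_2 := \W^\sigma(\E,\varepsilon)$, also tangent to $E^{c\sigma}$. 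Both pass through $x$ with tangent space $E^{c\sigma}(x)$, and both contain the common local $\sigma$-leaf $\W^\sigma(x,\eta/2) \subset \W^\sigma(y,\varepsilon)$.

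I would then conclude by proving local coincidence $N_1 = N_2$ near $x$, since this immediately gives $B^c \subset N_1 \subset N_2 = \W^\sigma(\E,\varepsilon)$ after possibly shrinking $\delta$. In a chart of $M$ near $x$ straightening $E^{c\sigma}(x)$ to a horizontal subspace, both $N_1$ and $N_2$ are graphs of $C^1$ functions from $E^{c\sigma}(x)$ to a complementary direction, and the graphs agree along the subspace $E^\sigma(x)$ that models the shared $\sigma$-leaf. To propagate agreement to the transverse $E^c(x)$-direction, I would use $\sigma$-saturation of both manifolds together with continuity of $\W^\sigma$: for $q \in N_1$ close to $x$, slide $q$ along the local $\sigma$-leaf of $q$ in $N_1$ toward the shared $\sigma$-leaf $\W^\sigma(x,\eta/2)$; this slide meets $\W^\sigma(x,\eta/2) \subset N_2$ by continuous dependence of $\sigma$-leaves on basepoint, and then the $\sigma$-saturation of $N_2$ forces the whole $\sigma$-leaf through $q$, and hence $q$, to lie in $N_2$.

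The main obstacle is that $E^{c\sigma}$ is merely continuous, so in general two $C^1$ manifolds tangent to it can diverge away from a shared $\sigma$-leaf, and the propagation is therefore not automatic. The crucial feature exploited is that both $N_1$ and $N_2$ are built as $\sigma$-saturations of $C^1$ center plaques, so each carries a continuous $\sigma$-sub-foliation whose local leaf space is a $C^1$ integral manifold of $E^c$. The most delicate point is to match these sub-foliations via a continuous $\sigma$-holonomy $T_y \to T_x$ between small transversals at $y$ and $x$ along the $\sigma$-segment from $y$ to $x$: this holonomy sends $\E \cap T_y$ to a submanifold through $x$ which one identifies with $B^c$ using the transversality $E^c \cap E^\sigma = 0$ and the fact that both $\E \cap T_y$ and $B^c$ are integral manifolds of the same weakly integrable $E^c$ at corresponding points.
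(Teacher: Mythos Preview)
Your approach has a genuine gap, and the paper's proof takes a cleaner route that avoids it.

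You fix an \emph{arbitrary} center plaque $B^c$ through $x$ furnished by weak integrability, and then try to force $B^c\subset\W^\sigma(\E,\varepsilon)$ by showing that the two $E^{c\sigma}$-manifolds $N_1=\W^\sigma(B^c,\eta/2)$ and $N_2=\W^\sigma(\E,\varepsilon)$ coincide near $x$. You correctly identify the obstacle: $E^{c\sigma}$ is only continuous, so sharing a $\sigma$-leaf does not force local coincidence. Your attempted propagation, however, does not close the gap. Sliding $q\in N_1$ along its own $\sigma$-leaf never reaches $\W^\sigma(x,\eta/2)$ unless $q$ was already on that leaf; distinct $\sigma$-leaves are disjoint. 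Your final paragraph rephrases the same difficulty as a $\sigma$-holonomy statement: you want the holonomy image of a small plaque of $\E$ near $y$ to coincide with $B^c$ near $x$. But with only \emph{weak} integrability there may be several center manifolds through $x$, and there is no reason the holonomy image (which is indeed contained in $N_2$) should be the particular $B^c$ you chose. The assertion ``both are integral manifolds of the same weakly integrable $E^c$ at corresponding points'' is exactly what is in question, so the argument is circular.

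The paper's proof bypasses the uniqueness problem entirely by bringing in the complementary direction $\bar\sigma$. One takes any small center ball $\widetilde B^c$ through $x$ (from weak integrability), builds the $c\bar\sigma$-manifold $\W^{\bar\sigma}(\widetilde B^c,\delta)$ via Proposition~\ref{prop_cs_local}, and then \emph{defines} $B^c$ as the connected component of $x$ in the transverse intersection $\W^\sigma(\E,\varepsilon)\cap\W^{\bar\sigma}(\widetilde B^c,\delta)$. This intersection is automatically contained in $\W^\sigma(\E,\varepsilon)$, and it is tangent to $E^{c\sigma}\cap E^{c\bar\sigma}=E^c$ at every point. No matching of pre-chosen center plaques is required.
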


\begin{proof}
Let $\bar\sigma=u$ if $\sigma=s$ and $\bar \sigma=s$ if
$\sigma =u$.

Fix a point $x\in \W^\sigma(\E, \varepsilon)$. By weak integrability
there exists a small ball $\widetilde B^c$ around $x$ that is
 tangent to $\Ec$. Then by
Proposition~\ref{prop_cs_local}, $\W^{\bar \sigma}(\widetilde B^c,
\delta)$, $\delta>0$, is a $C^1$ submanifold tangent to
$E^{\bar\sigma c}$. Clearly $\W^\sigma(\E, \varepsilon)\cap\W^{\bar
\sigma}(\widetilde B^c, \delta)$ is a $C^1$ immersed submanifold
tangent to $\Ec$. To finish the proof we let $B^c$ be the connected
component of $x$ in $\W^\sigma(\E, \varepsilon)\cap\W^{\bar
\sigma}(\widetilde B^c, \delta)$.
\end{proof}

\subsection{The center-stable and center-unstable leaves for a \dc \ph
diffeomorphisms}\label{section_cs_leaves_dc}
Given a center leaf $\E$ define 
\begin{equation}\label{def_cs_global}
\W^\sigma(\E)\stackrel{\mathrm{def}}{=}\bigcup_{\varepsilon>0}\W^\sigma(\E,\varepsilon), \;
\sigma = s, u.
\end{equation}
Clearly $\W^\sigma(\E)$ is a $C^1$ immersed submanifold as well.

 Let us assume now that $f$ is dynamically coherent (the center 
distribution $\Ec$ integrates uniquely to a foliation
$\Wc$). Then Proposition~\ref{prop_local_center_leaves} implies that for
any center leaf $\Wc(x)$ and any $y\in\W^\sigma(\Wc(x))$ the local
center manifold $\Wc(y,\delta)$ is contained in $\W^\sigma(\Wc(x))$
for sufficiently small $\delta$. Note that it does not follow that
the whole leaf $\Wc(y)$ is in $\W^\sigma(\Wc(x))$. The intersection
$\Wc(y)\cap\W^\sigma(\Wc(x))$ is some open subset of $\Wc(y)$.

\subsection{The accessible boundary}\label{section_accessible_boundary}
Consider the collection $\mathcal A^\sigma$ of smooth curves
$\alpha\colon[0,1]\to M$ such that 
$$
\dot{\alpha}\in \Ec,\;
\alpha([0,1))\subset\W^\sigma(\E),\; \alpha(1)\notin\W^\sigma(\E), \; \sigma=s,u.
$$
Equip $\mathcal A^\sigma$ with the following equivalence relation. Two curves
$\alpha_0$ and $\alpha_1$ are equivalent if there is a continuous homotopy
$\alpha_t, t\in[0,1]$ connecting $\alpha_0$ and $\alpha_1$ such that for all $t\in [0,1]$ $
\alpha_t\in \mathcal A^\sigma$  and $\alpha_t(1)=\alpha_0(1)$.

Define the {\it accessible boundary} of $\W^\sigma(\E)$ as the set of equivalence classes
 $$
\label{def_boundary}
\partial \W^\sigma(\E)\stackrel{\mathrm{def}}{=}\{[\alpha]: \alpha\in \mathcal A^\sigma\}, \; \sigma=s,u.
$$
Also define the {\it closure} $\W^\sigma(\E)^{\mathrm{cl}}\stackrel{\mathrm{def}}{=}\W^\sigma(\E)\sqcup\partial\W^\sigma(\E)$.
We equip the closure with the obvious topology. Note also that there is a natural projection
$\pi\colon\partial\W^\sigma(\E)\to M$, $[\alpha]\to\alpha(1)$. However $\partial \W^\sigma(\E)$
cannot be identified with a subset of $M$ since in general $\pi$ is not injective.

The following proposition is easy to prove. (See, \eg Proposition
1.13 in~\cite{BW} for a proof in dimension 3.)

\begin{prop}\label{prop saturated}
The boundary $\partial\W^\sigma(\E)$ is saturated by the leaves of
$\W^\sigma$, $\sigma=s, u$. This means that for every $[\alpha]\in\partial \W^\sigma(\E)$
there exists a continuous map $\W^\sigma(\alpha(1))\to\partial \W^\sigma(\E)$ that fits into
the commutative diagram 
$$
\xymatrix{
\W^\sigma(\alpha(1)) \ar@{->}[rd]_i \ar@{->}[r]
& \partial \W^\sigma(\E)\ar@{->}[d]^\pi\\
&M
 }
$$
where $i$ is the natural immersion of $\W^\sigma(\alpha(1))$, $\sigma=s, u$.
\end{prop}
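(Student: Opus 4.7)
The plan is as follows. Fix $[\alpha] \in \partial\W^\sigma(\E)$ and set $x := \alpha(1)$. Since $f$ is dynamically coherent and $\dot\alpha \in \Ec$, the entire curve $\alpha$ lies in a single center leaf $\Wc(x)$. I aim to construct, for each $y \in \W^\sigma(x)$, a curve $\alpha_y \in \mathcal A^\sigma$ with $\alpha_y(1) = y$, first on a small stable neighborhood of $x$ and then globally by a path-lifting argument over $\W^\sigma(x)$.

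\emph{Local construction.} Choose $\eta, \delta > 0$ small enough that the tail $\alpha([1-\eta,1])$ is contained in $D^c := \Wc(x,\delta)$, and fix a small $\e_0 > 0$. By Proposition~\ref{prop_cs_local}, $N := \W^\sigma(D^c, \e_0)$ is a $C^1$ injectively immersed submanifold tangent to $E^{c\sigma}$. Combining dynamical coherence with Proposition~\ref{prop_local_center_leaves}, the restrictions $\Wc|_N$ and $\W^\sigma|_N$ are transverse subfoliations whose tangent distributions $\Ec$ and $E^\sigma$ span $E^{c\sigma}$, giving $N$ a local product structure. In particular, for every $y \in \W^\sigma(x,\e_0)$ there is a $\W^\sigma$-holonomy $h_y \colon D^c \to \Wc(y) \cap N$ that preserves stable plaques. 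Define
$$
\alpha_y(t) := h_y\bigl(\alpha((1-\eta) + \eta t)\bigr), \qquad t \in [0,1].
$$
Then $\alpha_y$ is tangent to $\Ec$ (it lies in $\Wc(y)$), ends at $h_y(x) = y$, and for $t<1$ satisfies $\alpha_y(t) \in \W^\sigma(\alpha((1-\eta)+\eta t), \e_0) \subset \W^\sigma(\E)$. The endpoint $y$ is not in $\W^\sigma(\E)$ because individual $\W^\sigma$-leaves are either wholly inside or wholly disjoint from the saturated set $\W^\sigma(\E)$, and $x \notin \W^\sigma(\E)$ by assumption. Hence $\alpha_y \in \mathcal A^\sigma$, and the equivalence class $[\alpha_y]$ is independent of the auxiliary choices $(\eta, \delta, \e_0)$ because the local product structure on $N$ is canonical; continuity of $y \mapsto [\alpha_y]$ on $\W^\sigma(x,\e_0)$ is immediate.

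\emph{Global extension.} Given arbitrary $y \in \W^\sigma(x)$, choose a continuous path $\gamma \colon [0,1] \to \W^\sigma(x)$ from $x$ to $y$. By compactness I partition $[0,1]$ into finitely many subintervals $0 = s_0 < \cdots < s_k = 1$ so that each pair $\gamma(s_i), \gamma(s_{i+1})$ fits inside the stable $\e_0$-ball of a common local construction based at $[\alpha_{\gamma(s_i)}]$; iterating produces a continuous lift $s \mapsto [\alpha_{\gamma(s)}]$ starting at $[\alpha]$. Since $\W^\sigma(x)$ is diffeomorphic to $\R^{\dim \Es}$ and hence simply connected, the terminal value depends only on $y$, and I set $\Phi(y) := [\alpha_y]$. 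By construction $\Phi \colon \W^\sigma(x) \to \partial\W^\sigma(\E)$ is continuous and satisfies $\pi \circ \Phi = i$.

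\emph{Main obstacle.} The technical crux is establishing the local product structure on $N$, namely that the center and stable plaques inside $N$ are compatible transverse subfoliations admitting a well-defined holonomy $h_y$. This is precisely where dynamical coherence enters essentially: without unique integrability of $\Ec$ the center plaques inside $N$ could branch, and the assignment $y \mapsto [\alpha_y]$ would fail to be unambiguous. Everything else — the verifications $\alpha_y \in \mathcal A^\sigma$, continuity, and path independence on a simply connected leaf — is bookkeeping.
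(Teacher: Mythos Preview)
Your argument is correct. The paper itself does not supply a proof of this proposition; it merely remarks that it is ``easy to prove'' and points to Proposition~1.13 of~\cite{BW} for the three-dimensional case. Your approach---building the map $\Phi$ locally via the $\W^\sigma$-holonomy inside the product chart $N=\W^\sigma(D^c,\e_0)$ furnished by Propositions~\ref{prop_cs_local} and~\ref{prop_local_center_leaves}, and then propagating along paths in the simply connected leaf $\W^\sigma(x)$---is exactly the natural one and is essentially what the reference~\cite{BW} does in its setting.

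Two small remarks. First, you invoke dynamical coherence at the outset to place $\alpha$ inside a single leaf $\Wc(x)$; strictly speaking the proposition as stated in Section~\ref{section_accessible_boundary} does not carry that hypothesis (note the remark immediately following it discusses a non-coherent example). Your proof only really needs weak integrability so that the center plaques inside $N$ exist and the holonomy $h_y$ is defined; uniqueness of the class $[\alpha_y]$ does not require that $\alpha$ globally sits in a leaf, only that its tail sits in the plaque $D^c$, and any curve tangent to $\Ec$ with endpoint $x$ has a tail lying in any chosen integral plaque through $x$ once $\eta$ is small enough. Second, you should note explicitly that $\Phi(x)=[\alpha]$, i.e.\ that the reparametrized tail $\alpha|_{[1-\eta,1]}$ is equivalent to $\alpha$; this follows by shrinking the domain via the obvious homotopy inside $\W^\sigma(\E)$. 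Neither point is a gap, just bookkeeping you already flagged as such.
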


\begin{cor}\label{cor_codim_1_boundary}
 If $\dim E^c=1$ then $\W^\sigma(\E)^\mathrm{cl}$ is a manifold with smooth boundary 
$\partial \W^\sigma(\E)$ which is a union of leaves of $\W^\sigma$ (some of which can
be repeated), $\sigma=s, u$.
\end{cor}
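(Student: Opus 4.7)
The plan is to promote the $C^1$ manifold structure on the interior $\W^\sigma(\E)$ given by Corollary~\ref{cor_cs_epsilon} to a $C^1$ manifold-with-boundary structure on $\W^\sigma(\E)^{\mathrm{cl}}$ by constructing a half-space chart around each accessible-boundary point and identifying its boundary slice with a $\W^\sigma$-plaque. Fix $[\alpha]\in\partial\W^\sigma(\E)$ and set $y:=\alpha(1)$. First I would observe that $\alpha\subset\Wc(y)$ by dynamical coherence (since $\alpha$ is $C^1$, tangent to $E^c$, and ends at $y$). Because $\dim E^c=1$, I can parametrize the local center leaf as an arc $c\colon(-\delta,\delta)\to M$ with $c(0)=y$, and after reversing orientation if needed arrange that $\alpha$ enters $y$ through $c((-\delta,0))$. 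Proposition~\ref{prop_cs_local} then yields an injectively immersed $C^1$ $c\sigma$-manifold
$$
W:=\W^\sigma\!\left(c((-\delta,\delta)),\varepsilon\right),
$$
which carries a local product structure by short $\W^\sigma$-plaques and short center arcs.

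Next I would split $W$ into the interior part $W^-:=\bigcup_{t\in(-\delta,0)}\W^\sigma(c(t),\varepsilon)$ and the boundary plaque $W^0:=\W^\sigma(y,\varepsilon)$. For $t<0$ the point $c(t)$ lies in $\W^\sigma(\E)$ (since $\alpha$ passes through $c(t)$ before reaching $y$), so the discussion in Section~\ref{section_cs_leaves_dc} following Proposition~\ref{prop_local_center_leaves} puts the entire plaque $\W^\sigma(c(t),\varepsilon)$ into $\W^\sigma(\E)$, and hence $W^-\subset\W^\sigma(\E)$. By Proposition~\ref{prop saturated}, the plaque $W^0$ injects into $\partial\W^\sigma(\E)$ along $\W^\sigma$-leaves, and for each $z\in W^0$ the short center arc through $z$ inside $W$ realizes the corresponding boundary class (any two such arcs are homotopic through curves of $\mathcal A^\sigma$ ending at $z$). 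Gluing these pieces together, I would define a map $\Phi\colon W^-\cup W^0\to\W^\sigma(\E)^{\mathrm{cl}}$ equal to the inclusion on $W^-$ and to the Proposition~\ref{prop saturated} map on $W^0$, and argue it is a homeomorphism of a topological half-space of dimension $\dim E^{c\sigma}$ onto an open neighborhood of $[\alpha]$; the boundary slice is then the smooth $\W^\sigma$-plaque $W^0$, giving the desired $C^1$ half-space chart.

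The main obstacle I foresee is the surjectivity of $\Phi$ onto a neighborhood of $[\alpha]$: I must rule out other equivalence classes in $\partial\W^\sigma(\E)$ accumulating onto $[\alpha]$ from inside the chart. This is precisely where $\dim E^c=1$ is indispensable — any curve of $\mathcal A^\sigma$ ending sufficiently close to $y$ inside $W$ must, by dynamical coherence and the one-dimensionality of center leaves, be a reparametrization of one of the short center arcs of $W$, and any two such arcs ending at the same point are homotopic within $\mathcal A^\sigma$. Consequently each $z\in W^0$ near $y$ carries exactly one boundary class $[\beta_z]$ in the chart, and surjectivity follows. Once this local analysis is in place, smoothness of $\partial\W^\sigma(\E)$ is automatic from the smoothness of $\W^\sigma$-leaves, and the global description of $\partial\W^\sigma(\E)$ as a union of $\W^\sigma$-leaves is obtained by taking the union of the plaques $W^0$ over all boundary points.
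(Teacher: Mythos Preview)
The paper does not supply a proof of this corollary; it is stated immediately after Proposition~\ref{prop saturated} and left as a direct consequence. Your construction of half-space charts around each $[\alpha]$ via the local product structure from Proposition~\ref{prop_cs_local}, with the boundary slice identified with a $\W^\sigma$-plaque through Proposition~\ref{prop saturated}, is exactly the natural argument the paper is suppressing, so your approach agrees with the implicit one.

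One minor point worth making explicit in your write-up: nothing prevents the arc $c((0,\delta))$ on the far side of $y$ from also lying in $\W^\sigma(\E)$, in which case there is a second accessible-boundary class sitting over the same $\W^\sigma$-plaque. This is precisely the ``some of which can be repeated'' phenomenon in the statement, and it does not interfere with your chart since you only include $W^-\cup W^0$; the other class simply gets its own half-space chart on the opposite side.
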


\begin{figure}[htbp]
\begin{center}
\includegraphics{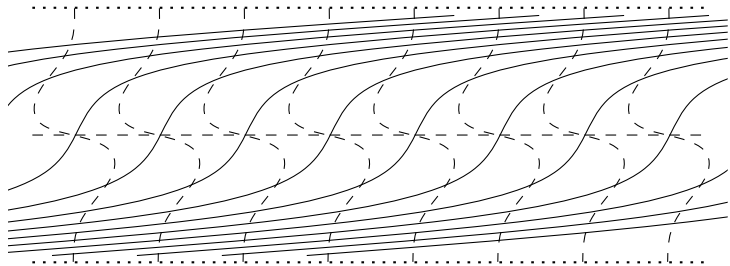}
\end{center}
 \caption{The solid curves represent the stable leaves and the dashed curves represent the center leaves.
The horizontal dashed line $\E$ is tangent to the center distribution at every point. However
$\E$ is not a leaf of the invariant center foliation. Still one can form $\Ws(\E)$ which is
an open strip with two boundary components drawn by dotted lines. Note that on the manifold
these two boundary components are represented by the same stable leaf.  }
\label{fig1}
\end{figure}

\begin{rmk}
To the best of our knowledge currently there is no example of a \dc \ph
\diff-sm of a compact manifold with a center leaf $\E$ (not necessarily compact) such
that $\partial \Ws(\E)\ne \varnothing$. However, an example
of a partially hyperbolic diffeomorphism with a one dimensional center distribution
that does not integrate uniquely was constructed~\cite{RHDC}. In this example
there is a curve $\E$ tangent to $E^c$ and diffeomorphic to $\mathbb R$ such that $\Ws(\E)$ is an open strip
with two boundary components as shown on Figure~\ref{fig1}.
\end{rmk}

\subsection{Volume recurrent center leaves}

A compact center leaf $\E$ is called {\it volume recurrent} if 
$$
\liminf_{n\to\infty} vol(f^n\E)<+\infty.
$$

The following proposition will not be used in the proofs but we include it since it allows a
better description of the accessible boundary of the lifted center-stable manifolds $\partial\widehat\W^s(\widehat\E)$, see Remark~\ref{rmk_acc_boundary}.
\begin{prop} \label{prop_finite_intrscn} Let $f$ be a \dc \ph \diff-sm 
with a compact volume recurrent center leaf $\E$. Then for any $x\in\E$
the intersection $\W^\sigma(x)\cap\E$ is a finite set. Moreover, the number
of points of intersection, $j_\sigma(x)\stackrel{\mathrm{def}}{=}\#(\W^\sigma(x)\cap\E)$,  is an upper semi-continuous function on $\E$, $\sigma=s, u$.
 \end{prop}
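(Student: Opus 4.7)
The strategy combines the uniform local product structure of $(\Ws,\Wc)$ with the volume recurrence of $\E$. I treat $\sigma=s$; the case $\sigma=u$ is symmetric, using $f^{-1}$ in place of $f$ (together with the analogous backward volume recurrence, which holds automatically for the arguments that invoke Proposition~\ref{prop_finite_intrscn}). First, by transversality of $\Es$ and $\Ec$ on compact $M$ I fix $\e_0>0$ such that $\Ws(p,\e_0)\cap\Wc(p,\e_0)=\{p\}$ for every $p\in M$, and a constant $v_0>0$ such that every center plaque $\Wc(p,\e_0/2)$ is embedded with intrinsic volume $\ge v_0$ (using continuity of $\Ec$). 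Volume recurrence yields $V>0$ and a sequence $n_k\to\infty$ with $\mathrm{vol}(f^{n_k}\E)\le V$.

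\emph{Finiteness of $S_x\stackrel{\mathrm{def}}{=}\Ws(x)\cap\E$.} For distinct $y_1,y_2\in S_x$, exponential contraction of $f$ along $\Ws$ places both $f^n y_i$ in $\Ws(f^n x,\e_0/2)$ for $n$ large. I claim then $d^c(f^n y_1,f^n y_2)\ge\e_0$ on the leaf $f^n\E$: otherwise $f^n y_2$ would lie in $\Ws(f^n y_1,\e_0)\cap\Wc(f^n y_1,\e_0)=\{f^n y_1\}$ by the local product structure at $f^n y_1$, forcing $y_1=y_2$. Given any $N$ distinct points of $S_x$, pick $n_k$ so large that this separation holds for all pairs; then the balls $\Wc(f^{n_k}y_i,\e_0/2)$ are pairwise disjoint in $f^{n_k}\E$, whence $Nv_0\le\mathrm{vol}(f^{n_k}\E)\le V$, so $|S_x|\le V/v_0<\infty$.

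\emph{Upper semi-continuity.} Let $x_m\to x_0$ in $\E$ with $j_s(x_m)\ge N$, and select $N$ distinct $z_m^i\in S_{x_m}$. By compactness of $\E$ extract a subsequence along which $z_m^i\to z_\infty^i\in\E$ for each $i$. Any coincidence $z_\infty^i=z_\infty^j$ with $i\ne j$ is ruled out by the same local product argument as in the finiteness step, applied at the common limit: it would force $z_m^i=z_m^j$ for large $m$. So the $N$ limit points are distinct, and it remains to show $z_\infty^i\in\Ws(x_0)$ for each $i$.

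The main obstacle is precisely that last step, because the stable distances $d^s(x_m,z_m^i)$ are not a priori uniformly bounded, so continuity of the stable foliation on compact sets does not directly apply. I plan to exploit the $f$-invariance $j_s\circ f=j_s$ by pushing the configuration forward to a volume-recurrent time $n_k$: the finiteness step (applied at $f^{n_k}x_m$) controls the iterated intersection points uniformly within a stable neighborhood of size $\e_0/2$ of $f^{n_k}x_m$, and then the local continuity of $\Ws$ at the uniform scale $\e_0$ lets me identify $f^{n_k}z_\infty^i\in\Ws(f^{n_k}x_0)$, whence $z_\infty^i\in\Ws(x_0)$ by $f$-invariance of the stable foliation.
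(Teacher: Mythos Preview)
Your finiteness argument is correct and is essentially the paper's: push a finite collection of intersection points forward until they all sit in a small local stable plaque, observe the corresponding local center plaques are pairwise disjoint inside $f^n\E$, and bound the count by $\liminf vol(f^n\E)/\mu$.

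The semi-continuity part, however, is aimed at the wrong target. The word ``upper'' in the statement is a slip: the paper's own one-line proof reads ``continuity of $\Ws$ inside $\Ws(\E)$ implies that for any $x\in\E$ and any $y\in\E$ sufficiently close to $x$, $\#(\Ws(y)\cap\E)\ge\#(\Ws(x)\cap\E)$,'' which is \emph{lower} semi-continuity, and this is exactly what is used later (Remark~\ref{rmk_acc_boundary} treats the sets $\{x\in\E: j_s(x)\ge l\}$ as open). Lower semi-continuity is immediate: the finitely many points of $\Ws(x)\cap\E$ each persist under the stable holonomy inside $\Ws(\E)$ as $x$ moves slightly along $\E$.

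Your attempt to prove genuine upper semi-continuity has a real gap, and in fact two. First, the sentence ``the finiteness step (applied at $f^{n_k}x_m$) controls the iterated intersection points uniformly within a stable neighborhood of size $\e_0/2$'' is not what the finiteness step gives: that step bounds the \emph{number} of intersection points, it never asserts that \emph{all} of them lie in any prescribed local stable plaque after iterating. The time $n_k$ needed to bring a given $z_m^i$ into $\Ws(f^{n_k}x_m,\e_0/2)$ depends on $d^s(x_m,z_m^i)$, which you have already noted is uncontrolled; choosing $n_k$ first (from volume recurrence) does nothing to fix this. Second, the ``coincidence'' argument has the same defect: if $z_m^i,z_m^j\in\Ws(x_m)\cap\E$ both converge to some $z_\infty$, you know they are $d^c$-close on $\E$ and lie on the same \emph{global} stable leaf, but you have no bound on $d^s(z_m^i,z_m^j)$, so the local product structure $\Ws(p,\e_0)\cap\Wc(p,\e_0)=\{p\}$ does not apply. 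In short, replace the second half of your proof by the easy lower semi-continuity argument via local stable holonomy; the ``upper'' in the proposition should be read as ``lower.''
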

\begin{proof}
 Choose a sufficiently small $\e>0$ so that the following property holds: for any $y\in M$
and for any $z\in \Ws(y,\e)$, $z\neq y$, the local center leaves $\Wc(y,\e)$ and
$\Wc(z,\e)$ are disjoint. Define
$$
\mu\stackrel{\mathrm{def}}{=}\inf_{y\in M} vol(\Wc(y,\e)).
$$
Clearly $\mu>0$.

Consider a finite set $\{x_1, x_2,\ldots x_k\}\subset\Ws(x)\cap\E$. Then there exists a sufficiently large $N$
such that $\forall n\ge N$ the set $\{f^nx_1, f^nx_2,\ldots f^nx_k\}$ will be contained 
in a local stable manifold of size $\e$ and hence the local center manifolds $\Wc(f^nx_1,\e), 
\Wc(f^nx_2,\e),\ldots \Wc(f^nx_k,\e)$
will be disjoint. Note that these local center manifolds are subsets of the leaf $f^n\E$.
It follows that
$$
\forall n\ge N  \; vol(f^n\E)\ge k\mu.
$$
This yields an upper bound on $k$
$$
k\le \frac1\mu\liminf_{n\to\infty} vol(f^n\E).
$$

Hence $\Ws(x)\cap\E$ is a finite set. Finally notice that continuity of $\Ws$ inside $\Ws(\E)$
implies that for any $x\in\E$ and any $y\in\E$ which is sufficiently close to $x$ 
$\#(\Ws(y)\cap\E)\ge\#(\Ws(x)\cap\E)$. Therefore function $j_s$ is upper semi-continuous.

By reversing the time we also see that $\#(\Wu(x)\cap\E)$ is bounded and upper semi-continuous.
\end{proof}

\subsection{The splitting of the center holonomy}\label{section_splitting}
The following statement is an immediate corollary of the uniqueness of the center leaves.
\begin{prop}\label{prop_product_holonomy}
 Assume that $f$ is a \dc \ph \diff-sm. Then the holonomy group of the center foliation
splits as a product $G_x(\Wc)=G_x^s(\Wc)\times G_x^u(\Wc)$, where $G_x^s(\Wc)$ and $G_x^u(\Wc)$
are the holonomy groups of $\Wc$ inside $\Ws(\Wc(x))$ and $\Wu(\Wc(x))$ respectively.
\end{prop}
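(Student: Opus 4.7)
The plan is to build a local foliation chart at $x$ that is simultaneously adapted to $\Wc$ and to the center-stable and center-unstable leaves through $x$, and then to read off the product structure of the holonomy action directly from the chart.

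First I would use dynamical coherence together with Proposition~\ref{prop_local_center_leaves} to observe that in a sufficiently small neighborhood $U$ of $x$ the three foliations $\Wc$, the center-stable foliation $\{\Ws(\Wc(y))\}$, and the center-unstable foliation $\{\Wu(\Wc(y))\}$ fit together as a product: there exist coordinates $(z^c, z^s, z^u) \in \mathbb{R}^{\dim \Ec} \times \mathbb{R}^{\dim \Es} \times \mathbb{R}^{\dim \Eu}$ in which the $\Wc$-plaques are $\{(\cdot, y^s, y^u)\}$, the center-stable plaques are $\{(\cdot, \cdot, y^u)\}$, and the center-unstable plaques are $\{(\cdot, y^s, \cdot)\}$. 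The key local identity behind this chart is that a center-stable plaque and a center-unstable plaque meet in exactly one center plaque, which in turn follows from uniqueness of the center leaf through each point. Choose the transversal $T = \{0\} \times T^s \times T^u$ to $\Wc$ at $x$; then $T^s \subset \Ws(\Wc(x))$ and $T^u \subset \Wu(\Wc(x))$, and by construction the restriction of the $\Wc$-holonomy to $T^s$ (respectively $T^u$) is precisely the intrinsic $\Wc|_{\Ws(\Wc(x))}$-holonomy (respectively $\Wc|_{\Wu(\Wc(x))}$-holonomy), i.e.\ the action of $G_x^s(\Wc)$ (respectively $G_x^u(\Wc)$).

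With such compatible charts in hand, the main observation is that for any $\gamma \in \pi_1(\Wc(x), x)$ and any $(y^s, y^u) \in T$, the $\Wc$-holonomy lift of $\gamma$ starting at $(y^s, y^u)$ remains, throughout a chain of charts covering $\gamma$, on the single center-stable leaf through $(y^s, y^u)$, because $\Wc$ is a subfoliation of the center-stable foliation. Returning to $U$, the endpoint therefore has $u$-coordinate equal to the image of $y^u$ under the center-stable holonomy along $\gamma$; since $\gamma$ itself lies in $\Wc(x) \subset \Ws(\Wc(x))$ this image equals $h_\gamma^u(y^u)$. Symmetrically the $s$-coordinate is transformed by $h_\gamma^s$, yielding $h_\gamma(y^s, y^u) = (h_\gamma^s(y^s), h_\gamma^u(y^u))$. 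This is precisely the splitting claimed, and the assignment $h_\gamma \mapsto (h_\gamma^s, h_\gamma^u)$ embeds $G_x(\Wc)$ into $G_x^s(\Wc) \times G_x^u(\Wc)$ as a subgroup whose projections onto the two factors are surjective.

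The main obstacle is the opening step, namely rigorously producing the simultaneous product-trivializing chart for the three foliations. This rests on dynamical coherence (for uniqueness of the center leaf through each point, which gives the crucial local equality of a center plaque with the intersection of its center-stable and center-unstable plaques) together with Proposition~\ref{prop_local_center_leaves} (which places small center plaques inside the appropriate center-stable and center-unstable submanifolds). Once the chart is in place, the rest is routine bookkeeping in the chain-of-charts definition of holonomy, and the product formula for $h_\gamma$ is forced by the containment of $\Wc$ as a subfoliation of both the center-stable and center-unstable foliations.
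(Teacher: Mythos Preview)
Your argument is correct and is precisely the spelling-out of the paper's one-line justification: the paper gives no proof, stating only that the proposition is ``an immediate corollary of the uniqueness of the center leaves,'' and your product chart together with the formula $h_\gamma(y^s,y^u)=(h_\gamma^s(y^s),h_\gamma^u(y^u))$ make that corollary explicit. Your closing caveat---that the argument literally yields an embedding $G_x(\Wc)\hookrightarrow G_x^s(\Wc)\times G_x^u(\Wc)$ with surjective projections rather than an isomorphism onto the full direct product---is well taken; this weaker conclusion is all that the paper actually uses (triviality or finiteness of the factors forces the same for $G_x(\Wc)$).
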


\section{Anosov homeomorphisms}

A \ph skew product $f\colon M\to M$ projects to a \homeo $\bar
f\colon X\to X$. Clearly $\bar f$ must be ``uniformly hyperbolic" in
some sense. However, there is no smooth structure on $X$ which is
compatible with dynamics. (In fact, it is not clear how to equip $X$ with some smooth structure.) This motivates a more general definition of an Anosov
homeomorphism.

\subsection{Definition}
Let $(X,\rho)$ be a compact metric space. For a \homeo $h\colon X\to
X$ define
\begin{multline*}
\qquad\Ws(x,\varepsilon)\stackrel{\mathrm{def}}{=}\{y\in X\colon \rho(h^nx, h^ny)\leq\varepsilon, \forall  n\geq 0\}, \\
\qquad\shoveleft{\Wu(x,\varepsilon)\stackrel{\mathrm{def}}{=}\{y\in X\colon \rho(h^nx,
h^ny)\leq\varepsilon, \forall  n\leq 0\}.\hfill}
\end{multline*}
We say that $h\colon X\to X$ is an {\it Anosov \homeo} if there
exist  $\varepsilon>0$, $\delta>0$, and constants $C>0$,
$\lambda \in (0,1)$ such that
\begin{multline*}
\mbox{(AH1)}\qquad y\in \Ws(x, \varepsilon) \Rightarrow \rho(h^nx,
h^ny)
\leq C\lambda^n\rho(x,y), n\geq 0, \\
\qquad \quad\;\;\,\quad\shoveleft{y\in \Wu(x, \varepsilon)
\Rightarrow
\rho(h^{-n}x, h^{-n}y) \leq C\lambda^n\rho(x,y), n\geq 0. \hfill}\\
\end{multline*}
\quad \,(AH2)\qquad If $\rho(x,y)\leq\delta$ then there is a unique
point of intersection
$$
[x,y]\stackrel{\mathrm{def}}{=}\Ws(x,\varepsilon)\cap\Wu(y,\varepsilon)
$$
\qquad \quad\qquad\quad and the map $[\cdot,\cdot]\colon \{(x, y)\in
X^2 \colon \rho(x,y)\leq\delta\}\to X$ is continuous.

\begin{rmk}
The original definition of Anosov \homeo was given by Bowen \cite{B} and by Alekseev and
Yakobson~\cite{AY}  in order to axiomatize the
conditions needed for the construction of Markov partitions. Alekseev
and Yakobson use term ``$A^\#$-homeomorphism.'' Their definition is
identical to the above one except that they require constant $C$ to
be equal to 1. Soon we will see that for our purposes the above
definition is more convenient. The difference is actually very minor.
The usual adapted metric construction is applicable in this
setting: given an Anosov \homeo $h\colon (X, \rho)\to(X,\rho)$ one
can construct an equivalent metric $\tilde \rho$ such that $h\colon (X,
\tilde\rho)\to(X,\tilde\rho)$ is $A^\#$-homeomorphism in the sense
of Alekseev and Yakobson.
\end{rmk}

Given an Anosov \homeo $h$ define global stable and unstable sets in
the standard way:
\begin{multline*}
\qquad\Ws(x)\stackrel{\mathrm{def}}{=}\{y\colon \rho(h^nx, h^ny)\to 0, \;\; n\to +\infty\}, \\
\qquad\shoveleft{\Wu(x)\stackrel{\mathrm{def}}{=}\{y\colon \rho(h^{-n}x, h^{-n}y)\to 0,\;\;
 n\to +\infty\}.\hfill}
\end{multline*}

Now let $X$ be a topological manifold. We say that $h\colon X\to X$
is {\it Anosov \homeo with stable and unstable foliations} if $h$ is
an Anosov \homeo and the stable and unstable sets form two continuous
topologically transverse foliations of $X$.  In this case it makes sense to speak about
dimensions of $\Ws$ and $\Wu$.

\subsection{Partially hyperbolic skew products and Anosov
homeomorphisms} Let us go back to the fibration $p\colon M\to X$,
the \ph skew product $f\colon M\to M$ and the factor $\bar f \colon
X\to X$. 

Endow $X$ with the Hausdorff metric defined as
$$
\rho(a,b)\stackrel{\mathrm{\textup{def}}}{=}\max\left(\max_{x\in \Wc(a)} \min_{y\in
\Wc(b)}d(x, y), \max_{x\in \Wc(b)}\min_{y\in \Wc(a)}d(x,y)\right),
$$
where $d$ is the metric on $M$ induced by the \R metric. Here we slightly abuse the notation by writing $\Wc(\cdot)$ for
$p^{-1}(\cdot)$.

\begin{prop} \label{phsp-Anhaueo}
Homeomorphism $\bar f\colon (X,\rho)\to(X,\rho)$ is an Anosov \homeo
with stable and unstable foliations.
\end{prop}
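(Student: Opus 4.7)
The natural candidates for the local stable and unstable sets of $\bar f$ are the projections of the center--stable and center--unstable manifolds of the center leaves. For $a\in X$ let $\E_a\stackrel{\mathrm{def}}{=}p^{-1}(a)=\Wc(x)$ for any $x\in p^{-1}(a)$, and define
$$
\overline{\Ws}(a,\e)\stackrel{\mathrm{def}}{=}p\bigl(\Ws(\E_a,\e)\bigr),\qquad \overline{\Wu}(a,\e)\stackrel{\mathrm{def}}{=}p\bigl(\Wu(\E_a,\e)\bigr).
$$
The first step is to show, using local triviality of $p$ together with Corollary~\ref{cor_cs_epsilon} and Proposition~\ref{prop_local_center_leaves}, that for small enough $\e$ the sets $\overline{\Ws}(a,\e)$ and $\overline{\Wu}(a,\e)$ are topological discs of dimensions $\dim\Es$ and $\dim\Eu$ that depend continuously on $a$, and that they are topologically transverse at $a$. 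This produces two candidate continuous transverse foliations of $X$.

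To verify (AH1) I would fix uniform constants $C>0$, $\lambda\in(0,1)$ giving the stable contraction of $f$ on $M$. Suppose $b\in\overline{\Ws}(a,\e)$ with $\e$ smaller than the local product size for the triple $(\Ws,\Wu,\Wc)$. Then $\E_b\subset\Ws(\E_a,\e)$, and since $\E_a$ and $\E_b$ are compact and Hausdorff-close in $M$, the stable holonomy $\E_a\to\E_b$ inside the center-stable plaque $\Ws(\E_a,\e)$ is a homeomorphism. For each $x\in\E_a$ the corresponding $y\in\E_b$ satisfies $d(x,y)\le\e$ and
$$
d(f^n x,f^n y)\le C\lambda^n d(x,y),\quad n\ge 0.
$$
Taking the $\max$--$\min$ over $\E_a$ and $\E_b$ in the definition of the Hausdorff metric $\rho$ yields $\rho(\bar f^n a,\bar f^n b)\le C\lambda^n\rho(a,b)$ for all $n\ge 0$. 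The unstable estimate is identical applied to $f^{-1}$. One also has to check that the Anosov-homeomorphism stable set $\Ws(a,\e)\subset X$ agrees with $\overline{\Ws}(a,\e)$; this is where the domination $\|Df|_{\Es}\|<\|Df|_{\Ec}\|<\|Df|_{\Eu}\|$ is essential, since it prevents two center leaves from being Hausdorff-asymptotic without already lying in a common center--stable leaf.

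For the local product structure (AH2), I would choose $\delta>0$ so small that whenever $d(x,y)\le 3\delta$ the local product structure of the partially hyperbolic splitting on $M$ makes $\Ws(x,\e)\cap \Wu(y,\e)$ a single point $w$, on whose center plaque $\Wc(w,\e)$ is well defined by Proposition~\ref{prop_local_center_leaves}. If $\rho(a,b)\le\delta$, pick $x\in\E_a$ and $y\in\E_b$ with $d(x,y)\le\delta$ and set $[a,b]\stackrel{\mathrm{def}}{=}p(w)$. Dynamical coherence, built into the definition of a \ph skew product through the foliation $\Wc$, guarantees that translating $x$ along $\E_a$ drags $y$ along $\E_b$ by center holonomy and slides $w$ along its own center leaf, so $p(w)$ is independent of the choices; continuity of $[\cdot,\cdot]$ then follows from joint transverse continuity of $\Ws$ and $\Wu$ on the compact manifold $M$, and uniqueness of the intersection in $X$ follows from the foliation statement of Step~1.

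\textbf{Main obstacle.} The delicate part is the interface between the Hausdorff metric $\rho$ on $X$ and the pointwise hyperbolic estimates on $M$. Concretely, one must show that the stable holonomy between two nearby compact center leaves is a \emph{global} homeomorphism (so that the $\max$--$\min$ definition of $\rho$ reduces to a single Lipschitz bound), and that the bracket $[a,b]$ is both well defined and lies in $\overline{\Ws}(a,\e)\cap\overline{\Wu}(b,\e)$ as a single point of $X$. Both points boil down to the dynamical coherence hypothesis and the local product neighbourhood in which all three invariant foliations trivialize simultaneously; once this is set up the remaining verifications of (AH1), (AH2), transversality, and continuity of the foliations on $X$ are routine compactness arguments.
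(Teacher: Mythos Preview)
Your outline is correct and matches the paper's strategy: the candidate stable and unstable foliations on $X$ are the projections of the center-stable and center-unstable leaves, (AH2) comes from the local product structure of $(\Ws,\Wc,\Wu)$ on $M$, and the main work is (AH1). You also correctly flag the delicate point as the passage from pointwise hyperbolic estimates on $M$ to the Hausdorff metric $\rho$.

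For (AH1) the paper takes a somewhat different route. Rather than invoking the stable holonomy $h\colon\E_a\to\E_b$ and then arguing that $\max_{x}d(x,h(x))$ is comparable to $\rho(a,b)$, the paper introduces an auxiliary metric $d^{scu}$ on $M$, the infimum of lengths of piecewise smooth paths whose velocity lies in one of $E^s$, $E^c$, $E^u$, together with the associated Hausdorff metric $\rho^{scu}$ on $X$. Uniform transversality of the three continuous distributions on the compact $M$ gives $d\sim d^{scu}$ and hence $\rho\sim\rho^{scu}$, so it suffices to verify (AH1) for $\rho^{scu}$. The point of this reduction is that for two center leaves lying in a common thin center-stable plaque, nearly optimal $d^{scu}$-paths between them carry no unstable component and the fraction of their length spent in the stable direction tends to $1$ as the plaque thickness $\e\to 0$; the contraction estimate then follows directly. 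Your holonomy argument is more hands-on, but to close it you still owe the inequality $\max_{x\in\E_a}d^s(x,h(x))\le K\,\rho(a,b)$ (your ``$\max$--$\min$ reduces to a single Lipschitz bound''), which again comes down to the uniform transversality of $E^s$ and $E^c$ inside center-stable plaques---essentially the same content as the equivalence $d\sim d^{scu}$. Both approaches work; the paper's has the virtue of isolating the transversality input once, in the metric equivalence, rather than re-invoking it pointwise.
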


\begin{proof}
Notice that the local stable and unstable manifolds of $a \in X$ are
simply the projections of $\Ws(\Wc(a),\varepsilon)$ and
$\Wu(\Wc(a),\varepsilon)$ on $X$. It immediately  follows that the stable
and unstable sets of $\bar f$ are, in fact, transverse continuous
foliations. 

Property (AH2) can also be established rather easily. Consider two points $a,
b\in X$ with $\rho(a, b)<\delta$. Then transversality implies that
for an appropriate value of $\varepsilon$ (more precisely, $\varepsilon
= C\delta$, where $C$ depends on the lower bounds of the angles between
$\Es$, $\Ec$ and $\Eu$), $\Ws(\Wc(a),\varepsilon)$ and
$\Wu(\Wc(a),\varepsilon)$ intersect at a single center leaf. This
leaf depends continuously on $(a, b)$ and, hence, (AH2) follows.

Verification of (AH1) requires some work. Let $\A$ be the collection
of piecewise smooth paths $\alpha \colon [0,1]\to M$ such that
$\dot{\alpha}$ lies in one of the distributions $\Es$, $\Ec$ or
$\Eu$ whenever $\alpha$ is differentiable. Define a new metric,
$d^{scu}$, on $M$ in the following way
$$
d^{scu}(x,y)=\inf_{\alpha\in\A, \alpha(0)=x,
\alpha(1)=y}(length(\alpha)).
$$
Define the corresponding Hausdorff metric on $X$:
$$
\rho^{scu}(a,b)\stackrel{\mathrm{\textup{def}}}{=}\max\left(\max_{x\in \Wc(a)} \min_{y\in
\Wc(b)}d^{scu}(x, y), \max_{x\in \Wc(b)}\min_{y\in
\Wc(a)}d^{scu}(x,y)\right).
$$
Since $d^{scu}$ is equivalent to $d$ we also have that $\rho$ is
equivalent to $\rho^{scu}$. Hence it is clear that \homeo $\bar f$
being Anosov with respect to $\rho^{scu}$ implies $\bar f$ being Anosov with respect to $\rho$ ---
one only needs to change the values of $C$, $\varepsilon$ and
$\delta$ from the definition of Anosov homeomorphism. Therefore, it
is sufficient to show that $\bar f\colon(X, \rho^{scu})\to(X,
\rho^{scu})$ satisfies (AH1).

The proof of the first inequality (the second one is analogous) from
(AH1) is based on the following observation:

1. Fix a point $b\in M$.

2. For a sufficiently small $\varepsilon>0$ and a point $a\in \Ws(\Wc(b), \varepsilon)$ one can choose a sequence of
paths $\{\alpha_n\}$ from $\A$ that connect $a$ and $b$ such that
$$\underset{n\to \infty}{\lim}length(\alpha_n)=d^{scu}(a,b)$$ and the paths $\alpha_n$ do not have components that lie in $\Wu$.

3. Given any $\kappa>0$ one can find a suffiently small $\varepsilon>0$ such that,
if $a\in \Ws(\Wc(b), \varepsilon)$ realizes the minimum $\min_{x\in\Wc(a)}d^{scu}(x,b)$  and
$\{\alpha_n\}$ is a sequence of paths connecting $a$ and $b$ as above, then the lower limit of the ratio of the total
length of components of $\alpha_n$ that lie in $\Ws$ to the length
of $\alpha_n$ is greater than $1-\kappa$.

We omit the estimate itself as it is rather standard.
\end{proof}
\subsection{The classification of codimension 1 Anosov homeomorphisms with stable and
unstable foliations}\label{subs_codim_1}
Recall that given a \homeo $h\colon X\to X$, a point $x\in X$ is
called a {\it wandering point} if there exists an open neighborhood
$\U$ of $x$ such that the sets $h^n(\U), n \geq 0$, are mutually
disjoint. Otherwise $x$ is said to be a {\it non-wandering point}.
The set of non-wandering points of $h$ will be denoted by $NW(h)$.

The following are generalizations of the theorems of Franks~\cite{F} and Newhouse~\cite{N} to the
setting of Anosov homeomorphisms.
\begin{theorem}\label{codim_1_F}
Let $h\colon X\to X$ be a codimension 1 Anosov \homeo  with stable and unstable foliations. 
Assume that
 $NW(h)=X$ then $X$ is homeomorphic to a torus and $h$
conjugate to an Anosov automorphism of the torus.
\end{theorem}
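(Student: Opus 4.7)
My plan is to adapt Franks' classical proof for codimension-1 Anosov diffeomorphisms with $NW(f)=M$ to the topological setting of an Anosov homeomorphism $h\colon X\to X$ with stable and unstable foliations. Without loss of generality assume $\dim \Wu = \dim X - 1$, so the one-dimensional foliation is $\Ws$. First I would collect the standard consequences of (AH1) and (AH2): $h$ is expansive, has the shadowing property, and carries a local product structure, as provided by the Bowen and Alekseev-Yakobson machinery referenced earlier. Combined with $NW(h)=X$, Smale-style spectral decomposition then reduces $X$ to a single basic set on which periodic points are dense and $h$ is topologically transitive.

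Next I would show that every leaf of the codimension-1 foliation $\Wu$ is dense in $X$: the closure of $\Wu(x)$ is closed, saturated by unstable leaves, and $h^{-1}$-invariant, hence by transitivity must equal $X$. Using this density together with local product structure, I would lift $h$ to $\tilde h$ on the universal cover $\widetilde X$ and show $\widetilde X$ carries a global product structure $\widetilde X \cong \widetilde\Ws(\tilde x)\times\widetilde\Wu(\tilde x) \cong \mathbb{R}\times\mathbb{R}^{n-1}$, so $\widetilde X$ is homeomorphic to $\mathbb{R}^n$. Following Franks, the deck action of $\pi_1(X)$ preserves both lifted foliations and the product splitting, and one argues that the combinatorics of this action forces $\pi_1(X)$ to be abelian. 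Since $X$ is then a closed aspherical manifold with abelian fundamental group, $X$ is homeomorphic to the torus $\mathbb{T}^n$.

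Finally, the induced automorphism $h_*\in GL(n,\mathbb{Z})$ is hyperbolic (its eigenvalues off the unit circle reflect the stable/unstable splitting), producing an algebraic model $A\colon\mathbb{T}^n\to\mathbb{T}^n$. A Franks-Manning shadowing construction yields a continuous semi-conjugacy $\phi\colon X\to\mathbb{T}^n$ satisfying $\phi\circ h = A\circ\phi$, and expansivity of $h$ upgrades it to a genuine conjugacy.

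The main obstacle is the step showing $\pi_1(X)$ is abelian in the purely topological setting. Franks' original argument uses smoothness of the foliations and differentiability of holonomies in essential places to set up transversals and control intersection behavior; replacing these with arguments that depend only on continuity of the stable/unstable foliations and on topological transversality from (AH2) is the delicate part. Once abelianness is established, the remaining steps (identification with the torus, construction of the conjugacy) follow fairly cleanly from shadowing and expansivity, which hold for Anosov homeomorphisms in exactly the same form as in the smooth case.
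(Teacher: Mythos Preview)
Your outline follows Franks' original argument, whereas the paper does not: it invokes Hiraide's short proof \cite{H} and indicates how that proof transfers to the topological setting. The architecture is the same through the global product structure on the universal cover, but the decisive step diverges. Hiraide does not attempt to prove abstractly that $\pi_1(X)$ is abelian and then invoke a recognition theorem for the torus. Instead, assuming the one--dimensional foliation is the unstable one, he constructs a transverse measure class $\mu^u$ on the unstable leaves that is invariant under the stable holonomy; this produces a canonical ``coordinate'' on the $\mathbb{R}$--factor of $\widetilde X\cong\mathbb{R}\times\mathbb{R}^{n-1}$ in which the deck transformations act by translations, and from there one reads off directly that the deck group is free abelian of rank $n$ and that $X$ is a torus with $h$ conjugate to a hyperbolic automorphism. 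The paper's only comment is that this construction and the subsequent deck--group analysis go through for Anosov homeomorphisms with stable and unstable foliations ``with only minor adjustments''.

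This is precisely the ingredient that fills the gap you flag. You are right that Franks' original route to abelianness leans on smoothness; Hiraide's measure--class argument is designed to avoid that, and it is what makes the topological version work. Your alternative endgame --- ``closed aspherical with abelian $\pi_1$ implies homeomorphic to $\mathbb{T}^n$'' --- is also not free: you would need $\pi_1$ free abelian of the correct rank (not just abelian), and then you are invoking the Borel conjecture for tori, a deep topological rigidity result that is quite foreign to the rest of the argument. The Hiraide route sidesteps both issues by building the affine torus structure directly from the holonomy--invariant measure.
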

\begin{theorem}\label{codim_1_N}
 If \homeo $h\colon X\to X$ is a codimension 1 Anosov \homeo  with stable and unstable foliations
then $NW(h)=X$.
\end{theorem}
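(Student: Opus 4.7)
The plan is to mirror Newhouse's original argument for codimension one Anosov diffeomorphisms, recasting each step so that it uses only the axioms (AH1), (AH2) and the continuity of $\Ws$, $\Wu$. First I would invoke the spectral decomposition for Anosov homeomorphisms, due to Bowen and Alekseev--Yakobson: axioms (AH1) and (AH2) are exactly the ingredients needed to build Markov partitions and conclude that $NW(h)$ decomposes as a finite disjoint union
$$
NW(h)=\Lambda_1\sqcup\cdots\sqcup\Lambda_N
$$
of closed, $h$-invariant basic sets, each carrying a local product structure, topologically transitive under $h$, with dense periodic points.

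Next I would produce an attractor. Set $\Lambda_i\succ\Lambda_j$ whenever some point has $\alpha$-limit in $\Lambda_i$ and $\omega$-limit in $\Lambda_j$; this order is acyclic because of the local product structure. A maximal element is an attractor $\Lambda$: there is an open trapping region $U$ with $h(\overline U)\subset U$ and $\Lambda=\bigcap_{n\ge 0}h^n(U)$, equivalently $\Wu(x)\subset\Lambda$ for every $x\in\Lambda$. Applying the same construction to $h^{-1}$ yields a repeller. Passing to $h^{-1}$ if necessary, I may assume $\dim\Wu=1$, so unstable leaves are one-dimensional and $\Ws$ is of codimension one with leaves that locally separate $X$. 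Assume for contradiction that $NW(h)\neq X$; then the attractor $\Lambda$ is proper, the basin $B=\bigcup_{x\in\Lambda}\Ws(x)$ is an open, $h$-invariant, $\Ws$-saturated set, and $X\setminus B$ is a non-empty closed invariant set containing the other basic sets.

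The heart of the argument is to show that such a proper attractor cannot exist. For any $x\in\partial B$, the local product structure (AH2) provides a product chart in which arbitrarily nearby points of $B$ sit; because $B$ is $\Ws$-saturated, the local unstable coordinates of these nearby points lie in $B$ as well, so $\Wu_{\mathrm{loc}}(x)\cap B$ accumulates on $x$. Since $\Wu_{\mathrm{loc}}(x)$ is a one-dimensional interval, this open subset consists of at most two half-open components abutting $x$, and their far endpoints in $\Wu_{\mathrm{loc}}(x)$ lie on $\partial B$. Iterating $h$ expands these intervals by (AH1) while $h$-invariance of $\partial B$ keeps their endpoints on $\partial B$; compactness of $\partial B$ together with Poincar\'e recurrence then produces a non-wandering point in $\partial B$, hence a basic set $\Lambda''\subset\partial B$ disjoint from $\Lambda$. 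Combining this with the codimension one structure of $\Ws$ and the one-dimensional unstable transversals, one then builds an open $h$-invariant set disjoint from $B$ whose union with $B$ exhausts $X$, contradicting $\partial B\neq\varnothing$.

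The main obstacle is precisely this last step. In Newhouse's smooth proof one exploits absolute continuity of $\Ws$ and smoothness of transversals, neither of which is available here, so the codimension one argument must be carried out using only topological input from (AH1), (AH2) and continuity of the two foliations. I expect that a careful one-dimensional connectedness argument along unstable leaves, combined with the fact that codimension one leaves of $\Ws$ locally disconnect $X$ and with the uniform expansion/contraction of (AH1), will serve as a suitable substitute for absolute continuity.
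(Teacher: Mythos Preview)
Your plan---spectral decomposition via Markov partitions (Bowen, Alekseev--Yakobson), then Newhouse's codimension-one argument---is exactly the paper's route; the paper cites Hiraide~\cite{H} and states that after the spectral decomposition the remainder is ``a soft topological argument that does not require any alternation in the topological setting.''

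Where you diverge from the paper is in diagnosing the difficulty. You claim that ``in Newhouse's smooth proof one exploits absolute continuity of $\Ws$ and smoothness of transversals,'' and you propose to manufacture a topological substitute. But Newhouse's proof that $NW(f)=M$ in codimension one---and certainly Hiraide's streamlined version---does \emph{not} use absolute continuity; it is already a purely topological argument about attractors, one-dimensional unstable leaves, and the fact that codimension-one stable leaves locally separate. The obstacle you anticipate is illusory, and the replacement argument you propose to build is unnecessary: the existing proof transfers verbatim once the spectral decomposition is in hand. This is precisely the paper's point.

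A smaller gap in your sketch: from the fact that the open set $\Wu_{\mathrm{loc}}(x)\cap B$ accumulates on $x$ you cannot conclude that it ``consists of at most two half-open components abutting $x$''; an open subset of an interval accumulating on a boundary point need not have any component with that point as an endpoint. This step does require justification, but a topological one rather than a measure-theoretic one. Reading Hiraide's argument will show you how the one-dimensionality of $\Wu$ and the local separation property of codimension-one $\Ws$ leaves handle this without any appeal to smoothness or absolute continuity.
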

Short proofs of the Franks-Newhouse Theorems were given by Hiraide~\cite{H}. These proofs 
transfer rather easily to the topological setting.

The proof of the Newhouse Theorem starts with the spectral decomposition for the Anosov diffeomorphism
and then proceeds with a soft topological argument that does not require any alternation in the topological setting. The spectral
decomposition in our setting comes from the spectral decomposition for a subshift of finite type since
Anosov homeomorphisms admit Markov partitions (see, \eg~\cite{AY}).

The proof of the Franks Theorem starts with establishing global product structure of the stable and unstable
foliations on the universal cover. Assuming that the unstable foliation is one dimensional,
a ``nice'' measure class $\mu^u$ is constructed on the leaves of the unstable foliation which is invariant 
under the holonomy along the stable foliation. This step works well in the topological setting.
The remaining arguments in~\cite{H} are devoted to the construction of the group
of deck transformations. With only minor adjustments they apply well for Anosov homeomorphisms with
stable and unstable foliations with only minor adjustments.

\section{The proof of Theorem~\ref{TA}}

Let $\widetilde M$ be a finite cover of $M$ such that the stable,  
center and unstable foliations lift to orientable foliations $\widetilde\W^s$,
$\widetilde\W^c$ and $\widetilde\W^u$. Then $f$ lifts to a partially 
hyperbolic diffeomorphism $\tilde f\colon\widetilde M\to\widetilde M$.
We will show that $\tilde f$ is the \phsp posited in Theorem~\ref{TA}. Since $f$ and $\tilde f$
are \dc the discussion in Sections~\ref{section_splitting} and~\ref{section_cs_leaves_dc}
applies and we can speak about the stable and unstable holonomy.
\begin{lemma}\label{lemma_codim_1}
 For any $x\in\widetilde M$ the holonomy groups $G_x^\sigma(\widetilde\W^c)$, $\sigma=s, u$, are
trivial.
\end{lemma}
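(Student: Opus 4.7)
By reversing time it suffices to prove $G_x^s(\widetilde\W^c)=\{\mathrm{id}\}$ for every $x\in\widetilde M$. A transversal to $\widetilde\W^c$ inside the center-stable manifold $\widetilde\W^s(\widetilde\W^c(x))$ at $x$ is a small piece of the stable leaf $\widetilde\W^s(x)$; since $\dim\widetilde\W^s=1$ and $\widetilde\W^s$ is orientable on $\widetilde M$ by construction, this transversal has a canonical orientation, and continuity of $\widetilde\W^c$ with $C^1$ leaves forces every element of $G_x^s(\widetilde\W^c)$ to be represented by an orientation-preserving germ of $(\R,0)$. Because orientation-preserving germs of $(\R,0)$ are monotone on a one-sided neighborhood of $0$, no non-identity element can have finite order, so $\mathrm{Homeo}^+((\R,0))$ is torsion-free. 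Hence it is enough to prove that $G_x^s(\widetilde\W^c)$ is \emph{finite} for every $x\in\widetilde M$.

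Suppose for contradiction that $G_{x_0}^s(\widetilde\W^c)$ is infinite for some $x_0$. Infinite stable holonomy forces the leaf-volume function to be discontinuous at $\widetilde\W^c(x_0)$: this follows from the Generalized Reeb Stability Theorem, whose contrapositive says that local continuity of $vol$ around a compact leaf forces a local foliated bundle structure with finite holonomy. Hence $\widetilde\W^c(x_0)\subset\B'$, the closed, $\tilde f$-invariant, $\widetilde\W^c$-saturated set of points at which $vol$ is not continuous. A closing-lemma argument in the \ph\ setting (applicable because $\B'$ has empty interior yet is closed and invariant) produces a $\tilde f$-periodic center leaf $\widetilde\W^c(x_1)\subset\B'$ with $\tilde f^N(x_1)=x_1$. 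In adapted stable coordinates on $\widetilde\W^s(x_1)$ the map $\tilde f^N$ acts as $y\mapsto\mu y$ with $\mu\in(0,1)$, and therefore self-conjugates the finitely generated group $G_{x_1}^s(\widetilde\W^c)$ (a quotient of $\pi_1(\widetilde\W^c(x_1))$) via the rescaling $h\mapsto \mu\cdot h(\mu^{-1}\,\cdot)$.

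The final step contradicts the existence of a non-trivial $h\in G_{x_1}^s(\widetilde\W^c)$ using this rescaling. If $h$ is represented by a loop $\gamma\subset\widetilde\W^c(x_1)$, then the backward iterates $h_k(y)=\mu^{-k}h(\mu^k y)$ are again elements of $G_{x_1}^s(\widetilde\W^c)$, realized by the loops $\tilde f^{-kN}\gamma$ in the same compact leaf $\widetilde\W^c(x_1)$, and defined on exponentially larger domains. One must show these yield infinitely many homotopically distinct loops in $\widetilde\W^c(x_1)$, producing infinitely many distinct elements of $G_{x_1}^s(\widetilde\W^c)$ that cannot all be representable by bounded-length loops in a fixed compact leaf --- the contradiction. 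The hard step is to rigorously establish this homotopical distinctness: one must link the transverse displacement $|h_k(y)-y|$ of a germ to a quantity visible on the fixed compact center leaf (such as the length of a representative loop), and show that backward iteration of $\tilde f$ inflates it without bound, exploiting that any finitely generated subgroup of $\mathrm{Homeo}^+((\R,0))$ invariant under a non-trivial contraction must be trivial. This is the technical heart of the lemma, combining partial hyperbolicity with the structure of compact foliations.
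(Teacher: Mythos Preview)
Your proposal has a genuine gap and is far more elaborate than necessary. Two steps do not go through. First, after assuming $G_{x_0}^s$ is infinite and passing to the invariant set $\B'$, you invoke a closing lemma to obtain a \emph{periodic} center leaf $\widetilde\W^c(x_1)\subset\B'$; but the closing lemma only produces a periodic center leaf \emph{near} a non-wandering point, and since $\B'$ has empty interior there is no reason this periodic leaf lies in $\B'$ or has non-trivial stable holonomy. Second, you explicitly label the final step --- showing that the conjugates $h_k$ yield infinitely many homotopically distinct loops in the fixed compact leaf --- as ``the technical heart'' and ``the hard step,'' but you do not carry it out. In fact the loops $\tilde f^{-kN}\gamma$ all lie in the finitely generated group $\pi_1(\widetilde\W^c(x_1),x_1)$, on which $\tilde f^N$ acts by an automorphism; there is no evident mechanism forcing their holonomy images to be pairwise distinct germs, so the sketched contradiction is not established.

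The paper's argument is direct and uses none of this machinery: no bad set, no closing lemma, no periodic leaf, no rescaling. Given a non-trivial $h\in G_x^\sigma(\widetilde\W^c)$, represent it by a holonomy map $H\colon\widetilde\W^\sigma(x,r)\to\widetilde\W^\sigma(x)$. Because $\dim E^\sigma=1$ and $\widetilde\W^\sigma$ is oriented, $H$ is an orientation-preserving interval homeomorphism fixing $0$ with $H\ne\mathrm{id}$; elementary monotonicity then forces either the forward or the backward $H$-orbit of some nearby point to be an infinite sequence $\{x_n\}_{n\ge1}$ of distinct points that remains in $\widetilde\W^\sigma(x,r)$. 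Since holonomy moves points along their own center leaves, all $x_n$ lie on the single compact leaf $\widetilde\W^c(x_1)$, and for small $\varepsilon$ the plaques $\widetilde\W^c(x_n,\varepsilon)$ are pairwise disjoint --- contradicting $vol(\widetilde\W^c(x_1))<\infty$. Your opening observation (orientation-preserving germs of $(\mathbb R,0)$ are torsion-free) is exactly the right ingredient; the point is that it already gives the contradiction immediately, without any reduction to finiteness.
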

The lemma and Proposition~\ref{prop_product_holonomy} imply that the full holonomy
groups $G_x(\widetilde\W^c)$ are trivial. Then the Reeb Stability Theorem implies
that $\tilde f$ is a \ph skew product and the leaf space of $\widetilde\W^c$ is a 2-dimensional topological 
manifold
$X$. By Proposition~\ref{phsp-Anhaueo} diffeomorphism $\tilde f$ projects to an Anosov 
homeomorphism $h\colon X\to X$. And by Theorems~\ref{codim_1_F} and~\ref{codim_1_N} \homeo $h$ is conjugate
to a hyperbolic automorphism of $\T^2$.

Hence to finish the proof of Theorem~\ref{TA} we only need to prove the above lemma.
\begin{proof}
Assume that $G_x^\sigma(\widetilde\W^c)$ is not trivial.
Choose $\alpha\in\pi_1(\widetilde\W^c(x),x)$ such that $h^c(\alpha)$ is non-trivial in $G_x^\sigma(\widetilde\W^c)$.
 For a sufficiently small $r>0$ the corresponding center holonomy 
$$
H^c(\alpha)\colon\widetilde\W^\sigma(x,r)\to\widetilde\W^\sigma(x)
$$
that represents $h^c(\alpha)$ is well defined. 
Map $H^c(\alpha)$ is an orientation preserving strictly increasing homeomorphism onto its image. 
Also $H^c(\alpha)$ is not identity. This implies that 
either forward or backward $H^c(\alpha)$-orbit of a point consists of distinct points and is contained in $\widetilde \W^\sigma(x,r)$. Denote this orbit
by $\{x_n; n\ge 1\}\subset\widetilde \W^\sigma(x,r)$. Obviously $x_n\in\widetilde\W^c(x_1)$. Also it is clear
that for sufficiently small $\e>0$ the center plaques $\widetilde\W^c(x_n,\e)$, $n\ge 1$, are disjoint. This
gives a contradiction since $vol(\widetilde\W^c(x_1))$ is finite.
\end{proof}

\section{Proof of Theorem~\ref{TB}}

By Corollary~\ref{cor_Reeb} foliation $\Wc$ is a locally trivial fibration over the leaf space $X$. Hence $f$ is a \ph
skew product over $h\colon X\to X$. Homeomorphism $h$ is Anosov by Proposition~\ref{phsp-Anhaueo}. Finally
since $\dim E^u=1$ Theorems~\ref{codim_1_F} and~\ref{codim_1_N} apply: $h$ is conjugate
to a hyperbolic automorphism of a torus.

\section{Accessible boundary is empty} \label{section_boundary}
 Consider a complete center leaf $\E$
of a \ph \diff-sm and the union $\Ws(\E)$ of the stable leaves that pass
through $\E$~\eqref{def_cs_global}. If $\Ws(\E)$ is complete then Proposition~\ref{prop_local_center_leaves}
implies that every point $y\in\Ws(\E)$ is a
center of a center-stable plaque of a
fixed size which is independent of $y$. If $\Ws(\E)$ is not complete
then it has a non-empty accessible boundary $\partial \Ws(\E)$
defined by~\eqref{def_boundary}. Proposition~\ref{prop_local_center_leaves}
still applies, but the size of the center-stable plaque decreases as the center of the 
plaque $y$ approaches $\partial
\Ws(\E)$.

{\bfseries Standing assumption:} {\it Partially hyperbolic diffeomorphism $f\colon M\to M$
is dynamically coherent with oriented one-dimensional compact center foliation $\Wc$. Center leaf $\E$
is length recurrent.}

\subsection{A reduction} \label{section_reduction} Recall that by 
Corollary~\ref{cor_codim_1_boundary} the closure $\Ws(\E)^\mathrm{cl}$ is a manifold
whose boundary is a union of stable leaves. 

For any $x\in\Ws(\E)$ the set $\E\cap\Ws(x)$ is either finite or countable ({cf. Remark~\ref{rmk_acc_boundary}). Each intersection point
in $\E\cap\Ws(x)$ depends continuously on $x\in\Ws(\E)$. Therefore the set
$$
\widehat\W^s(\widehat\E)\stackrel{\mathrm{\textup{def}}}{=}\{(x,q), x\in\Ws(\E), q\in\E\cap\Ws(x)\}
$$
inherits the topology, the smooth structure and the \R metric from $\Ws(\E)$. 

The map $p\colon\widehat\W^s(\widehat\E)\to\Ws(\E)$
$$
p((x,q))\stackrel{\mathrm{\textup{def}}}{=}x
$$
is a local isometry. Therefore the stable and the center foliations lift to foliations
$\widehat\W^s$ and $\widehat\W^c$ in $\widehat\W^s(\widehat\E)$. Each stable leaf
$\Ws(x), x\in \E$, lifts to $\#(\Ws(x)\cap\E)$ stable leaves
$\widehat\W^s((x,q))=(\Ws(x),q)$, $q\in\Ws(x)\cap\E$. It follows that $\widehat\W^s(\widehat\E)$
is saturated by complete stable leaves and can be viewed as the disjoint union
$$
\widehat\W^s(\widehat\E)=\bigsqcup_{x\in\widehat\E}\widehat\W^s(x),
$$
where
$$
\widehat\E\stackrel{\mathrm{def}}{=}\{(x,x), x\in\E\}.
$$
\begin{rmk}
Note that even though the center leaves lift locally to $\widehat\W^s(\widehat\E)$ they do not
necessarily lift globally as $p$ might fail to be a covering map.
\end{rmk}

Now we are ready to define the accessible boundary of $\widehat\W^s(\widehat\E)$. For each $x\in \widehat\W^s(\widehat\E)$ the center leaf $\widehat\W^c(x)$ is either oriented circle or an open segment
$\alpha_x\colon(0,1)\to\widehat\W^s(\widehat\E)$. In the latter case we complete $\alpha_x$ by adding two points $\alpha_x(0)$
and $\alpha_x(1)$ and define
$$
\partial\widehat\W^s(\widehat\E)\stackrel{\mathrm{\textup{def}}}{=}\bigcup_{\widehat\W^c(x)\sim(0,1)}\{\alpha_x(0),\alpha_x(1)\}.
$$
It is clear that if $p$ is one-to-one then the above definition coincides with the one in Section~\ref{section_accessible_boundary}.
We also define $\widehat\W^s(\widehat\E)^\mathrm{cl}\stackrel{\mathrm{\textup{def}}}{=}\widehat\W^s(\widehat\E)\sqcup\partial\widehat\W^s(\widehat\E)$
and metrize it in the following way.  Let $\hat d$ be the metric induced by the \R metric on $\widehat\W^s(\widehat\E)$, that is
$
\hat d(x,y)\stackrel{\mathrm{\textup{def}}}{=}\inf(length(\gamma))
$, where the infimum is taken over all smooth curves that connect $x$ and $y$. If $x, y\in\widehat\W^s(\widehat\E)^\mathrm{cl}$ then let $\alpha_x$ and $\alpha_y$ be corresponding parametrizations
of the center leaves with $\alpha_x(t_x)=x$ and $\alpha_y(t_y)=y$. Extend $\hat d$ in the following way
$$
\hat d(x,y)\stackrel{\mathrm{\textup{def}}}{=}\lim_{t_1\to t_x, t_2\to t_y}\hat d(\alpha_x(t_1),\alpha_y(t_2)).
$$

Now it is a routine exercise to check that $(\widehat\W^s(\widehat\E)^\mathrm{cl},\hat d)$ is the usual
completion of $(\widehat\W^s(\widehat\E),\hat d)$. Another routine exercise is to check that
Corollary~\ref{cor_codim_1_boundary} holds in this more general setting; that is, $\widehat\W^s(\widehat\E)^\mathrm{cl}$
is a manifold with boundary where boundary component of $\alpha_x(\sigma)$ is identified with
$
\Ws\big(\lim_{t\to\sigma} p\circ\alpha (t)\big), \;\;\sigma=0,1.
$

In the same way, for $n\ge 1$, define
$\widehat\W^s(f^n\widehat\E)$, $\partial\widehat\W^s(f^n\widehat\E)$, $\widehat\W^s(f^n\widehat\E)^\mathrm{cl}$ and $f^n\widehat\E$. Finally define
$$
\hat f\colon {\widehat\W^s(f^{n-1}\widehat\E)}\to{\widehat\W^s(f^n\widehat\E)}, \;\;
\hat f\colon (x,q)\mapsto(fx,fq), q\in f^{n-1}\E\cap\Ws(x).
$$
Note that both $f$ and $\hat f$ extend to the closures by continuity. We have the 
following commutative diagram:
$$
\begin{CD}
 \widehat\W^s(\widehat\E)^\mathrm{cl} @>\hat f>>\widehat\W^s(f\widehat\E)^\mathrm{cl}@>\hat f>>\ldots\\
@VVV@VVV@.\\
\Ws(\E)^\mathrm{cl} @>f>>\Ws(f\E)^\mathrm{cl}@>f>>\ldots
\end{CD}
$$
Notice that, a priori, ``$\hat f$ dynamics'' does not come from an ambient \diff-sm. However, 
if one studies ``$\hat f$ dynamics'' in the forward orbit of $\widehat \W^s(\widehat\E)^\mathrm{cl}$ then the 
tools that come from partial hyperbolicity (the center and stable foliations) are available. 

The advantage of considering $\hat f$ instead of $f$ is that for every 
$z\in\widehat \W^s(f^n\widehat\E)$ the intersection $\widehat\W^s(z)\cap f^n\widehat\E$ is a single point. 

\begin{rmk}\label{rmk_acc_boundary} The above reduction is very general and works for center leaves of higher 
dimension which do not have to be volume recurrent. However if $\E$ is one dimensional and length recurrent
then Proposition~\ref{prop_finite_intrscn} allows to say more about the structure of $\partial\widehat\W^s(\widehat\E)$. Namely, components of $\partial\widehat\W^s(\widehat\E)$ can be classified into two classes: 
\begin{enumerate}
\item The lifts of components of $\partial\Ws(\E)$.
\item The lifts of stable leaves $\Ws(y)$ where $y\in \E$ are boundary points of components of the open sets 
$\{x\in \E: j_s(x)\ge l\}$, $l=\overline{1,k}$ where  $j_s$ and $k$ are as in Proposition~\ref{prop_finite_intrscn}.
\end{enumerate}
\end{rmk}

{\bfseries A notational convention:} {\it To avoid heavy notation we take off the hats.
In other words, we will write $\Ws(\E)$ for $\widehat\W^s(\widehat\E)$, $f$ for $\hat f$, and so on.}

\subsection{The holonomy projection}
Pick a point $x\in f^d\E$, $d\ge 0$, and a point $y\in\Ws(x)$. Recall that $\Wc(y)\cap\Ws(f^d\E)$ is an open
subset of $\Wc(y)$. Let $\Dc(y)$ be the connected component of $y$
in $\Wc(y)\cap\Ws(f^d\E)$. In other words $\Dc(y)$ is the
leaf of $\Wc|_{\Ws(f^d\E)}$. If the set $\Dc(y)$ is
distinct from $\Wc(y)$ then $\Dc(y)$ is homeomorphic to an open interval. Also note that $\Dc(fy)=f(\Dc(y))$.

Define the holonomy projection $\Hyx \colon \Dc(y)\to f^d\E$ by taking
the intersection
$$
\Hyx(z)\stackrel{\mathrm{def}}{=}\Ws(z)\cap f^d\E.
$$
This is well defined according to the discussion in Section~\ref{section_reduction}.

\begin{lemma}\label{lemma_finite_covering}
If $\Dc(y)=\Wc(y)$ then $\Hyx$ is a covering map with a finite number
of sheets.
\end{lemma}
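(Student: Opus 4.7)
The plan is to reduce to the standard fact that a proper local homeomorphism between compact connected manifolds is a finite covering.

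First I would record the basic topology of the source and target. Since $\Wc$ is a one-dimensional compact oriented foliation and we assumed $\Dc(y)=\Wc(y)$, both $\Wc(y)$ and $f^d\E$ are compact connected one-manifolds, i.e.\ topological circles. (In the reduced/hatted setting, $f^d\widehat\E$ is still compact, being a diagonal copy of the compact leaf $f^d\E$.) The map $\Hyx$ is well defined and single-valued precisely because of the reduction in Section~\ref{section_reduction}, which arranges $\Ws(z)\cap f^d\E$ to contain exactly one point for every $z\in\Ws(f^d\E)$.

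Next I would check continuity and the local homeomorphism property of $\Hyx$. Continuity is immediate from continuity of the stable foliation inside $\Ws(f^d\E)$. For local injectivity, fix $z\in\Wc(y)$ and let $q=\Hyx(z)$. By Proposition~\ref{prop_cs_local} applied to a tiny center plaque $\Wc(q,\e)\subset f^d\E$, the set $\Ws(\Wc(q,\e),\e)$ is a $C^1$ immersed center-stable submanifold tangent to $E^{cs}$ that decomposes as a union of local stable leaves; by Proposition~\ref{prop_local_center_leaves} it is also foliated by local center plaques. Transversality of $\Ws$ and $\Wc$ then shows that the stable holonomy gives a homeomorphism between a small arc of $\Wc(y)$ centered at $z$ and the plaque $\Wc(q,\e)\subset f^d\E$; this homeomorphism is precisely the restriction of $\Hyx$. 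Hence $\Hyx$ is a local homeomorphism everywhere on $\Wc(y)$.

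Properness is free: $\Wc(y)$ is compact, so $\Hyx$ automatically sends closed sets to closed sets and has compact preimages. In particular, $\Hyx(\Wc(y))$ is both open (local homeomorphism) and closed (compact, hence closed in the Hausdorff space $f^d\E$) and nonempty, so by connectedness of the circle $f^d\E$ the map $\Hyx$ is surjective. A surjective proper local homeomorphism between compact connected manifolds is a covering map; since $\Wc(y)$ is compact, any fiber is both compact and discrete, therefore finite, and by connectedness of $f^d\E$ the number of sheets is independent of the base point and finite.

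The only step requiring any care is the local homeomorphism property, since one must produce the stable holonomy as an honest homeomorphism rather than a mere continuous map; this is where the local product structure provided by Propositions~\ref{prop_cs_local} and~\ref{prop_local_center_leaves} is used. Everything else is formal consequence of compactness of $\Wc(y)$ and connectedness of the target circle.
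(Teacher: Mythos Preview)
Your approach is correct in essence but differs from the paper's and contains one imprecision worth flagging. The paper argues dynamically: since $f\circ\Hyx=\H_{fy\to fx}\circ f$, it suffices to prove the claim for $\H_{f^ny\to f^nx}$ for some large $n$. By compactness $\Wc(y)\subset\Ws(f^d\E,R)$ for some $R$; applying $f^n$ contracts the stable direction so that $f^n\Wc(y)\subset\Ws(f^{n+d}\E,\varepsilon)$, which for small $\varepsilon$ is an honest fiber bundle over $f^{n+d}\E$ with local stable plaques as fibers. Then $\Wc(f^ny)$ is a compact embedded manifold transverse to these plaques and the finite covering statement is immediate. Your direct route via ``proper local homeomorphism between compact circles'' avoids the dynamics entirely, which is conceptually pleasant and would work in a purely foliation-theoretic setting.

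The imprecision is in your justification that $\Hyx$ is a local homeomorphism at $z$. You build the product box $\Ws(\Wc(q,\e),\e)$ from Propositions~\ref{prop_cs_local} and~\ref{prop_local_center_leaves}, but this box is an $\e$-neighborhood of $q$ in $\Ws(f^d\E)$ and need not contain $z$, since $d^s(z,q)$ can be arbitrarily large. What you really need is that the holonomy of $\Ws$ along the (possibly long) compact arc in $\Ws(q)$ from $q$ to $z$ carries the transversal $\Wc(q,\e)\subset f^d\E$ homeomorphically onto a center arc in $\Wc(y)$ about $z$; this is a standard fact about continuous foliations, but it is not what the cited propositions assert. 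The paper's dynamical trick of applying $f^n$ first is exactly what collapses this long stable arc to length $<\varepsilon$, so that the local product box around $f^nq$ genuinely contains $f^nz$ and the local propositions apply directly. If you patch your argument by invoking holonomy along compact leaf-paths (or, equivalently, by chaining finitely many overlapping $\e$-boxes along $\Ws(q)$ from $q$ to $z$), your proof goes through.
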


\begin{proof}
Clearly the holonomy projection commutes with dynamics: $$f\circ \Hyx
= \H_{fy\to fx} \circ f.$$ Therefore we only need to show that
there exists a positive number $n$ such that $\H_{f^ny\to f^nx}\colon \Wc(f^ny)\to f^{n+d}\E$
is a finite covering map.

Since $\Wc(y)\subset\Ws(f^d\E)$ we have that $\Wc(y)\subset\Ws(f^d\E, R)$
for a sufficiently large $R$. Choose a small $\varepsilon >0$ so that
$\Ws(f^{n+d}\E,\varepsilon)$, $n\ge 0$, ia a fiber bundle over $f^{n+d}\E$ with fibers being
small stable plaques. There exists $n$ such that $f^n(\Wc(y))\subset
f^n(\Ws(f^d\E,R))\subset\Ws(f^{n+d}\E,\varepsilon)$. The leaf
$\Wc(f^ny)=f^n(\Wc(y))$ is an embedded compact manifold transverse
to the fibers of the bundle $\Ws(f^{n+d}\E,\varepsilon)$. It follows that $\H_{f^ny\to f^nx}$ is
a finite covering map.
\end{proof}

We use notation $(z_1, z_2)^c$ to denote a positively oriented open
interval inside of a center leaf with endpoints $z_1$ and $z_2$. And we
write $\overline{(z_1, z_2)^c}$ for a negatively oriented center
interval.

\begin{lemma}\label{lemma_countable_covering}\footnote{The proof of this Lemma~\ref{lemma_countable_covering}
is the only place where the length recurrence assumption is used.}
If $\Dc(y_0)\ne \Wc(y_0)$, $y_0\in\Ws(\E)$, then $\H_{y_0\to x}\colon \Dc(y_0)\to f^d\E$ is a covering map with countable number
of sheets.
\end{lemma}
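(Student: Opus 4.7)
My plan is to establish that $\H_{y_0 \to x}$ is a local homeomorphism enjoying unique path lifting, and then to count sheets via the topology of $\Dc(y_0)$.

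\textbf{Local structure.} For $z \in \Dc(y_0)$, set $z_0 = \H_{y_0 \to x}(z)$. By Proposition~\ref{prop_cs_local} combined with Proposition~\ref{prop_local_center_leaves}, a small neighborhood of $z$ inside $\Ws(f^d\E)$ carries a local product structure: stable plaques crossed with a local center arc through $z$. After the reduction of Section~\ref{section_reduction} each stable leaf meets $f^d\E$ at exactly one point, so the stable holonomy from a small arc $U \subset f^d\E$ about $z_0$ onto the local center arc at $z$ is a continuous local inverse for $\H_{y_0\to x}$. Hence $\H_{y_0\to x}$ is a local homeomorphism.

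\textbf{Path lifting.} Let $\gamma\colon [0,1]\to f^d\E$ satisfy $\gamma(0) = \H_{y_0\to x}(y)$. Choose $\varepsilon>0$ uniformly (by Proposition~\ref{prop_cs_local}) so that for every center leaf $C$ the set $\Ws(C,\varepsilon)$ is a stable-plaque bundle over $C$; by uniform continuity of $\Wc$, there exists $\delta = \delta(L,\varepsilon)>0$ such that whenever $C$ has length $\leq L$ and $p \in \Ws(C,\delta)$, the center set $\{w\in\Wc(p) : d^c(w,p) \leq 2L\}$ lies inside $\Ws(C,\varepsilon)$. Length-recurrence of $\E$ furnishes arbitrarily large $n$ with $length(f^{n+d}\E) \leq L$ for a fixed $L$. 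Fix such an $n$ large enough that stable contraction also gives $f^n y \in \Ws(f^{n+d}\E,\delta)$. Then $f^n\gamma$ has length $\leq L$ and lifts through the stable bundle $\Ws(f^{n+d}\E,\varepsilon)$ starting at $f^n y$; by construction this lift stays in the center neighborhood above, hence inside $\Wc(f^n y)$. Pulling back by $f^{-n}$ and invoking uniqueness from the local homeomorphism step yields a lift of $\gamma$ into $\Dc(y_0)$.

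\textbf{Counting sheets.} A surjective local homeomorphism with unique path lifting onto a locally path-connected space is a covering map. Since $\Dc(y_0)$ is a connected proper open subset of the circle $\Wc(y_0)$, it is homeomorphic to $\mathbb R$, whereas $f^d\E \cong S^1$; the only covering map $\mathbb R \to S^1$ is the universal cover, whose fiber is $\pi_1(S^1) \cong \mathbb Z$, i.e.\ countably infinite.

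The delicate point is the path-lifting step: a priori, lifting $\gamma$ could push the path out to $\partial \Dc(y_0)$. The length-recurrence hypothesis is used precisely to produce $n$ with $length(f^{n+d}\E) \leq L$, thereby bounding the length of $f^n\gamma$ and confining its lift to a uniform-size center neighborhood of $f^n y$. This matches the footnote's remark that length recurrence appears only here.
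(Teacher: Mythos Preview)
Your argument is essentially the paper's argument repackaged through covering-space language, and it is correct in substance.  Both proofs use length recurrence in exactly the same way: pass to a forward iterate $n$ at which $f^{n+d}\E$ has bounded length and $f^n y$ lies in a thin stable neighborhood, lift the (image of the) path there via the local product structure, and pull back.  The paper carries this out concretely by lifting powers of the generator $\tau\in\pi_1(\E)$ to build an explicit bi-infinite sequence $\{y_m\}$ in $\Dc(y_0)$, then shows that its two limit points $a,\varkappa(a)$ lie outside $\Dc(y_0)$ and hence are the endpoints of the arc $\Dc(y_0)=(a,\varkappa(a))^c$.  You instead verify the abstract covering criterion (local homeomorphism $+$ path lifting) and then read off the deck group from $\pi_1(S^1)$.

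Two remarks.  First, the sentence ``Then $f^n\gamma$ has length $\le L$'' is only literally true when $\gamma$ traverses the circle at most once; for a path that winds $k$ times you need $\delta=\delta(kL,\varepsilon)$ (equivalently, let the choice of $n$ depend on $\gamma$), or else lift arcs and concatenate.  This is the same bookkeeping the paper does when it lets $\delta$ depend on $m$ in its Claim.  Second, and more consequential for the paper as a whole: the paper's proof is not merely proving the lemma but also \emph{constructing} the sequence $\{y_m\}$ and the boundary pair $a,\varkappa(a)$, which are used throughout the remainder of Section~\ref{section_boundary} (the center holonomy $\H^c_{y_0}$, the set $\ss(y_0)$, the Wada-Lakes argument).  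Your proof establishes the covering statement cleanly but does not produce these objects explicitly, so it cannot simply be substituted into the paper without adding that construction afterward.
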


\begin{proof}
Let $\tau\colon S^1\to\E$ be the positively oriented generator of
$\pi _1(\E)=\Z$. 
As in the proof of the previous lemma choose $\e>0$ sufficiently small
so that $\Ws(f^n\E,\e)$, $n\ge d$, is a fiber bundle with small stable plaques as fibers.

Since $\E$ is length recurrent we can choose an increasing sequence of times
$\{k_i;i\ge1\}$ such that
\begin{equation}\label{eq_subs}
\forall i\ge1 \;\; length(f^{k_i}\E)<C
\end{equation}
for some $C>0$.
\begin{claim}
 For any $m\ge1$ there exists $\delta>0$ such that for any $i\ge1$ and any
$z_0\in\Ws(f^{k_i}\E,\delta)$ the loops $f^{k_i}\circ\tau^m$ and
$f^{k_i}\circ\tau^{-m}$ lift along the stable plaques to $(z_0,z_m)^c$ and
$\overline{(z_0, z_{-m})^c}$.
\end{claim}
This claim is a direct corollary of~\eqref{eq_subs} and uniformity of the local product structure.

Choose a sufficiently large $i$ so that $z_0\stackrel{\mathrm{def}}{=}f^{k_i}y_0\in\Ws(f^{k_i}\E,\delta)$.
Then, by the above claim, the loops $f^{k_i}\circ\tau^m$ and
$f^{k_i}\circ\tau^{-m}$ lift to $(z_0,z_m)^c$ and
$\overline{(z_0, z_{-m})^c}$. By pulling back with $f^{-k_i}$ we get that $\tau^m$ and $\tau^{-m}$
lift to $(y_0,y_m)^c$ and
$\overline{(y_0, y_{-m})^c}$. This was sequence $\{y_m; m\in\Z\}$ is defined. It is obvious that $\{y_m; m\in\Z\}$ is an increasing sequence in $\Dc(y_0)$.

Let $a=\underset{m\to\infty}{\lim}y_{-m}$ and $\varkappa(a)
=\underset{m\to\infty}{\lim}y_m$. It is clear from our construction that $\H_{y_0\to x}\colon(a,\varkappa(a))^c\to\E$
is a covering map and that $\H^{-1}_{y_0\to x}(x)\supset\{y_m; m\in\Z\}$.

Assume for a moment that $a\in\Dc(y_0)$. Then there exists an $n>0$
such that $f^na$ belongs to $\Ws(f^n\E, \varepsilon)$.  It follows that $f^na$ has a
small neighborhood in $\Dc(f^na)$ that contains no more than one point (one when 
$a\in \H^{-1}_{y_0\to x}(x)$) from
$\H_{f^ny_0\to f^nx}^{-1}(f^nx)$. Consequently, $a$ is not an
accumulation point of $\H^{-1}_{y_0\to x}(x)$ in $\Dc(y_0)$, which
contradicts the definition of $a$.

Thus we see that $a$ is a boundary point of $\Dc(y_0)$. Analogously
$\varkappa(a)$ is the other boundary point of $\Dc(y_0)$.

We conclude that $\Dc(y_0)=(a, \varkappa(a))^c$ and $\H_{y_0\to
x}\colon\Dc(y_0)\to\E$ is a covering map.
\end{proof}

\begin{rmk}
The sequence $\{y_m, m\in\Z\}$ and the endpoints $a, \varkappa(a)$ 
constructed in the proof above will
be used throughout the current section.
\end{rmk}

\subsection{The center holonomy}
Consider the flow generated by the positively oriented vector field
tangent to the center foliation $\Wc$. Here we will argue that the
first return map to $\Ws(y_0)$ of this flow is a well defined
homeomorphism $\H_{y_0}^c\colon\Ws(y_0)\to\Ws(y_0)$.

It is clear from Lemma~\ref{lemma_countable_covering} that if we start
at $y_0$ and follow the leaf $\Wc(y_0)$ in positive direction then we will
return to $\Ws(y_0)$ at $y_1$ and this is the first return, \ie
$(y_0,y_1)^c\cap\Ws(y_0)=\varnothing$. Similarly for any
$z_0\in\Ws(y_0)$ with $\Dc(z_0)\ne \Wc(z_0)$ the simple argument in
the proof of Lemma~\ref{lemma_countable_covering} can be repeated and
yields a well defined point of first return $z_1\in \Ws(y_0)$. For
any such $z_0$ define $\H^c_{y_0}(z_0)=z_1$.

If $\Dc(z_0)=\Wc(z_0)$ for a point $z_0\in \Ws(y_0)$ then  we have that
the holonomy projection $\H_{z_0\to\Ws(z_0)\cap\E}\colon\Wc(z_0)\to\E$ is a finite covering
by Lemma~\ref{lemma_finite_covering}. Thus the first return time is
well defined as well.

Finally, we remark that continuity of $\Wc$ implies that
$\H^c_{y_0}$ is a homeomorphism.

\subsection{Structure of the center holonomy}
Here we reveal some topological structure on $\Ws(y_0)$ which is
respected by the center holonomy $\H^c_{y_0}$.

Recall that by Proposition~\ref{prop saturated} (and the discussion in Section~\ref{section_reduction})
$\Ws(a)\sqcup\Ws(\varkappa(a))\subset \partial\Ws(\E)$. 
For any point $p\in \Ws(a)$ a sufficiently small center interval $(p, p')^c$ is
contained in $\Ws(\E)$. On the other hand, $\Wc(p)\ne \Dc(p')$ and
hence there exists some $\varkappa(p)\in\Wc(p)$ such that
$\Dc(p')=(p,\varkappa(p))^c$.

\begin{lemma}\label{claim_boundary_pairing} For any point $p\in\Ws(a)$ we have
$\Ws(\varkappa(p))=\Ws(\varkappa(a))$ and the map $\Ws(a)\ni
p\mapsto\varkappa(p)\in\Ws(\varkappa(a))$ is a homeomorphism.
\end{lemma}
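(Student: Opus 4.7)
The plan is to verify well-definedness of $\varkappa$, then to show the map $\varkappa\colon\Ws(a)\to\partial\Ws(\E)$ is continuous with image in a single stable leaf of the boundary, then to identify this leaf as $\Ws(\varkappa(a))$, and finally to produce a symmetric inverse and thus a homeomorphism. Well-definedness is immediate: if $\Dc(p')=\Wc(p)$ then $p\in\Wc(p)\subset\Ws(\E)$, contradicting $p\in\partial\Ws(\E)$. Hence $\Dc(p')=(p,\varkappa(p))^c$ with a unique $\varkappa(p)\in\partial\Ws(\E)$.

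For continuity of $\varkappa$ and the stable-leaf structure of $\partial\Ws(\E)$, I would use the manifold-with-boundary picture of $\Ws(\E)^{\mathrm{cl}}$ (Corollary~\ref{cor_codim_1_boundary}, applied in the hatted setting of Section~\ref{section_reduction}). Near each boundary point $q$ a chart identifies a neighborhood with $\Ws(q,\delta)\times[0,\e)$ (or with $(-\e,0]$), the boundary sitting at the zero slice; so a neighborhood of $q$ in $\partial\Ws(\E)$ coincides with a local stable plaque $\Ws(q,\delta)$. This forces each connected component of $\partial\Ws(\E)$ to be a single stable leaf, and it also lets one extend the center arcs continuously to their boundary endpoints as the starting point on $\Ws(a)$ varies, giving continuity of $\varkappa$. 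Since $\Ws(a)$ is connected, its continuous image $\varkappa(\Ws(a))$ lies in a single connected component of $\partial\Ws(\E)$, hence in one stable leaf.

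To pin down this leaf, I would fix $p\in\Ws(a)$ close to $a$ and use the sequence $\{y_m\}\subset\Dc(y_0)$ from Lemma~\ref{lemma_countable_covering} with $y_m\to\varkappa(a)$ as $m\to\infty$. For each large $m$, the uniform local product structure of $\Ws$ and $\Wc$ inside $\Ws(\E)$ yields a unique point $p_m\in\Wc(p)\cap\Ws(y_m,\e)$ close to $y_m$; the boundary chart near $a$ then lets one check that $p_m$ lies in the same component $\Dc(p')$ (not in some other component of $\Wc(p)\cap\Ws(\E)$). Consequently $\Dc(p')$ contains points arbitrarily close to $\varkappa(a)$, forcing $\varkappa(p)$ into a neighborhood of $\varkappa(a)$ in $\Ws(\E)^{\mathrm{cl}}$ which, by the boundary chart at $\varkappa(a)$, meets $\partial\Ws(\E)$ only along $\Ws(\varkappa(a))$. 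Running the symmetric construction at each $q\in\Ws(\varkappa(a))$ (using a short \emph{negative} center interval out of $q$) produces a continuous map $\Ws(\varkappa(a))\to\Ws(a)$ inverse to $\varkappa$, finishing the proof. The main obstacles are the continuity of $\varkappa$ at the boundary and the verification that $p_m\in\Dc(p')$; both reduce to exploiting the manifold-with-boundary structure of $\Ws(\E)^{\mathrm{cl}}$ together with the uniform local product of $\Ws$ and $\Wc$ inside $\Ws(\E)$.
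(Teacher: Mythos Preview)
Your approach is essentially the paper's: establish that $\varkappa$ is locally a homeomorphism onto a stable plaque in the boundary (you phrase this via the manifold-with-boundary charts of Corollary~\ref{cor_codim_1_boundary}; the paper simply says ``continuity of the center foliation''), then use connectedness of $\Ws(a)$ to force the image into a single stable leaf, and finish by symmetry. This is correct.

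One unnecessary detour: your entire step ``to pin down this leaf'' with the sequence $\{y_m\}$ and the points $p_m$ is superfluous. Since $a\in\Ws(a)$ and $\varkappa(a)$ is by definition the image of $a$, once you know $\varkappa(\Ws(a))$ sits inside a single stable leaf of $\partial\Ws(\E)$ that leaf must be $\Ws(\varkappa(a))$. The paper's argument is exactly this: the sets $\U_p=\{x\in\Ws(a):\varkappa(x)\in\Ws(\varkappa(p))\}$ are open, disjoint for distinct target leaves, and cover $\Ws(a)$; connectedness gives $\U_a=\Ws(a)$, done. No approximation argument is needed.
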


\begin{proof}
Continuity of the center foliation implies that every point $p\in
\Ws(a)$ has an open neighborhood $\V_p\subset\Ws(p)$ that maps
homeomorphically to an open neighborhood
$\V_{\varkappa(p)}\subset\Ws(\varkappa(p))$ of $\varkappa(p)$ via
$x\mapsto \varkappa(x)$. It follows that the collection of open sets
$$
\U_p=\{x\in \Ws(a)\colon\varkappa(a)=\varkappa(p)\}, \qquad
p\in\Ws(a)
$$
is a cover of $\Ws(a)$ by open disjoint sets (we ignore
repetitions). Since $\Ws(a)$ is connected we have that $\U_p=\Ws(a)$
for every $p\in\Ws(a)$ and $\varkappa(p)\in\Ws(\varkappa(a))$.

A similar argument shows that $\Ws(a)\ni
p\mapsto\varkappa(p)\in\Ws(\varkappa(a))$ is onto and continuity  of
$\Wc$ implies that it is a homeomorphism.
\end{proof}
Recall that the point $a\in\partial\Ws(\E)$ is uniquely defined by $y_0$. Consider the set
$$
\ss=\ss(y_0)\stackrel{\mathrm{def}}{=}\bigcup_{p\in\Ws(a)}(p, \varkappa(p))^c.
$$

It is clear that $\ss$ is an open subset of $\Ws(\E)$. Also it is
clear that $\ss$ is a trivial continuous fiber bundle over $\Ws(a)$.

\begin{lemma}\label{claim_center_lifting}
Given a path $\alpha\colon[0,1]\to\Ws(a)$ and a point
$z_0\in(\alpha(0), \varkappa(\alpha(0)))^c\cap\Ws(y_0)$ there exists
a unique lift $\tilde\alpha\colon[0,1]\to\Ws(y_0)$ such that
$\tilde\alpha(0)=z_0$ and $\tilde\alpha(t)=(\alpha(t),
\varkappa(\alpha(t)))^c$. Moreover, if $\alpha$ is a loop then
$\tilde\alpha$ is a loop as well.
\end{lemma}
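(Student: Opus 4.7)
The plan is to treat this as a path-lifting problem for the fiber bundle $\ss \to \Ws(a)$, where the stable foliation restricted to $\ss$ supplies a horizontal structure transverse to the fibers. A crucial topological input is that $\Ws(a)$ and $\Ws(\varkappa(a))$ are stable leaves in $\partial\Ws(\E)$, hence disjoint from the interior stable leaf $\Ws(y_0)$.

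For local lifting, I would use the local product structure of a partially hyperbolic system: around any point of $\ss$ there is a neighborhood $V$ in which the stable and center foliations are topologically transverse, so $\Ws(y_0)\cap V$ is a local section of the bundle $\ss|_V \to \Ws(a)$. This forces any continuous lift to be locally unique. To build a global lift I would run the standard open–closed argument on $[0,1]$. Openness is immediate from local lifting. For closedness at $s_0$, as $s \to s_0^-$ the fibers $(\alpha(s), \varkappa(\alpha(s)))^c$ vary continuously, and because $\Ws(y_0)$ is disjoint from the boundary leaves $\Ws(a) \cup \Ws(\varkappa(a))$, the values $\tilde\alpha(s)$ cannot accumulate at the fiber endpoints; any subsequential limit lies in $\Ws(y_0) \cap (\alpha(s_0), \varkappa(\alpha(s_0)))^c$ and, by local uniqueness applied at $s_0$, is the only possible value. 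Global uniqueness of $\tilde\alpha$ then follows from local uniqueness and connectedness of $[0,1]$.

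For the loop case I would use that in the reduced setting of Section~\ref{section_reduction} each stable leaf, being a lift of a Euclidean stable leaf under the local isometry $p$, is itself homeomorphic to $\mathbb{R}^{\dim E^s}$; in particular $\Ws(a)$ is simply connected. Hence $\alpha$ can be contracted to the constant loop at $\alpha(0)$ via a continuous homotopy $\alpha_u\colon [0,1]\to\Ws(a)$, $u\in[0,1]$, with $\alpha_u(0)=\alpha(0)$ for all $u$, $\alpha_0=\alpha$, and $\alpha_1$ constant. Lifting each $\alpha_u$ with initial condition $z_0$ yields $\tilde\alpha_u$, and running the same open–closed argument jointly in $(u,t)$ on the compact square shows that the lift is continuous in $u$. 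Thus $u \mapsto \tilde\alpha_u(1)$ is a continuous path inside $\Ws(y_0) \cap (\alpha(0), \varkappa(\alpha(0)))^c$. Topological transversality of $\Ws$ and $\Wc$, together with compactness of the center circle $\Wc(\alpha(0))$, forces this intersection set to be discrete (in fact finite), so $u \mapsto \tilde\alpha_u(1)$ is constant; evaluating at $u=1$ gives $z_0$, whence $\tilde\alpha(1)=\tilde\alpha_0(1)=z_0$. The main obstacle is bookkeeping the two-variable continuity of the lifted homotopy, i.e.\ organizing the local lifting so it extends uniformly over $[0,1]^2$ — once that is in place, simple-connectedness of $\Ws(a)$ and discreteness of the intersection with the fiber combine to close the loop.
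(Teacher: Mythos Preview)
Your treatment of existence and uniqueness of the lift is essentially the paper's one-line observation made explicit: $\Ws(y_0)\cap\ss$ is an injectively immersed submanifold of $\ss$ without boundary, transverse to the center fibers, and this yields path-lifting. (Your closedness step is a bit loose---disjointness of $\Ws(y_0)$ from $\Ws(a)\cup\Ws(\varkappa(a))$ does not by itself rule out accumulation; what is really used is the local product structure of the stable and center plaques near the limit point---but the paper is equally terse here.)

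For the loop case your argument is correct but takes a genuinely different route. You exploit that $\Ws(a)$, being a stable leaf, is Euclidean and hence simply connected: contract $\alpha$, lift the contraction, and use discreteness of $\Ws(y_0)\cap(\alpha(0),\varkappa(\alpha(0)))^c$ to pin down the endpoint. The paper instead argues by contradiction on the ambient topology: assuming $\tilde\alpha(1)=z_k$ with $k\neq 0$, it concatenates the center arc $(z_k,z_0)^c$ with $\tilde\alpha$ to form a loop $\beta$, observes that $\beta$ lies in the ball $\ss(R)$ and is therefore contractible in $\Ws(\E)$, yet by sliding along stable leaves $\beta$ is homotopic in $\Ws(\E,R')$ to the nontrivial class $\tau^{|k|}$, since $\Ws(\E,R')$ is a disk bundle over the circle $\E$. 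Your approach is cleaner covering-space theory and stays local to $\ss$; the paper's approach makes the role of $\pi_1(\E)=\Z$ explicit and foreshadows the later use of $\Ws(\E,R')$. One correction: your parenthetical ``in fact finite'' is false---for $\alpha(0)=a$ the intersection is precisely the countable set $\{y_m:m\in\Z\}$ from Lemma~\ref{lemma_countable_covering}---but your argument only requires discreteness, which does follow from transversality.
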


\begin{proof}
The intersection
$\Ws(y_0)\cap\ss$ is an injectively immersed submanifold of $\ss$ without boundary which
is transverse to the fibers of $\ss$. This implies the existence and uniqueness of the lift
$\tilde\alpha$.

To prove the second part of the lemma recall that the set
$\Ws(z_0)\cap\Dc(z_0)$ can be ordered into a sequence $\{z_m,
m\in\Z\}$ as in the proof of Lemma~\ref{lemma_countable_covering} (for
$y_0$). Then $\tilde\alpha(1)\in\{z_m, m\in\Z\}$.

Assume that $\tilde\alpha(1)=z_k, k\ne 0$. For concreteness also assume
 that $k<0$.

The loop $(z_k, z_0)^c*\tilde\alpha$ can be perturbed to a smooth
loop $\beta\colon S^1\to \ss$ transverse to $\Ws$. Since the image
of $\beta$ is compact there exists a sufficiently large $R$ such
that $\beta(S^1)\subset\ss (R)$, where
$$
\ss(R)\stackrel{\mathrm{def}}{=}\bigcup_{p\in\Ws(a,R)}(p,\varkappa(p))^c.
$$
The set $\ss(R)$ is homeomorphic to a ball. Therefore, $\beta$ is
contractible inside $\Ws(\E)$. Since the image of this contraction is
compact we can choose sufficiently large $R'$ so that the above
contraction happens inside $\Ws(\E,R')$.

On the other hand, $\beta$ can be homotoped inside $\Ws(\E, R')$
to $\tau^{|k|}$ (recall that $\tau$ is the positive generator of
$\E$) by sliding the loop along the stable leaves towards $\E$.

The set $\Ws(\E, R')$ is a disk bundle over $\E$ and, hence, is
homotopy equivalent to $\E$. Therefore $\beta\sim\tau^{|k|}$ is
non-trivial in $\Ws(\E,R')$. We have arrived at a contradiction and
thus have established the second part of the lemma.
\end{proof}

Denote by $\U(y_k)$ the path connected component of
$\ss\cap\Ws(y_0)$ that contains $y_k$. Clearly, sets $\U(y_k)$,
$k\in \Z$, are open.

\begin{lemma}\label{claim_uk_exhaust}
Every point $z\in\ss\cap\Ws(y_0)$ belongs to  $\U(y_k)$ for some $k$.
\end{lemma}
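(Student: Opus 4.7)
The plan is to use the structure of $\ss$ as a trivial fiber bundle over $\Ws(a)$ together with the path-lifting property from Lemma~\ref{claim_center_lifting} to connect an arbitrary $z\in\ss\cap\Ws(y_0)$ to one of the points $y_k$ via a path that stays inside $\ss\cap\Ws(y_0)$.

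More precisely, given $z\in\ss\cap\Ws(y_0)$, let $p_0\in\Ws(a)$ be the unique point such that $z\in(p_0,\varkappa(p_0))^c$. Since stable leaves are diffeomorphic to Euclidean space, $\Ws(a)$ is path-connected, so one can choose a continuous path $\alpha\colon[0,1]\to\Ws(a)$ with $\alpha(0)=p_0$ and $\alpha(1)=a$. The hypotheses of Lemma~\ref{claim_center_lifting} are met with starting point $z\in(p_0,\varkappa(p_0))^c\cap\Ws(y_0)$, so $\alpha$ lifts uniquely to a path $\tilde\alpha\colon[0,1]\to\Ws(y_0)$ satisfying $\tilde\alpha(0)=z$ and $\tilde\alpha(t)\in(\alpha(t),\varkappa(\alpha(t)))^c$ for all $t$. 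In particular the image of $\tilde\alpha$ lies in $\ss\cap\Ws(y_0)$.

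By construction the endpoint $\tilde\alpha(1)$ lies in the fiber over $a$, that is,
\[
\tilde\alpha(1)\in (a,\varkappa(a))^c\cap\Ws(y_0)=\Dc(y_0)\cap\Ws(y_0)=\{y_m:m\in\Z\},
\]
where the last equality comes from the proof of Lemma~\ref{lemma_countable_covering}. Thus $\tilde\alpha(1)=y_k$ for some $k\in\Z$, and $\tilde\alpha$ exhibits a path inside $\ss\cap\Ws(y_0)$ connecting $z$ to $y_k$, proving that $z\in\U(y_k)$.

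The only subtlety is ensuring the lift terminates at a point of the distinguished sequence $\{y_m\}$, which is guaranteed by steering $\alpha$ toward the specific basepoint $a$; the machinery of Lemma~\ref{claim_center_lifting} then does all of the work, so no further obstacle is anticipated.
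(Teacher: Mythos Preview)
Your proof is correct and follows essentially the same route as the paper's own argument: pick the fiber $(p,\varkappa(p))^c$ through $z$, connect $p$ to $a$ by a path in $\Ws(a)$, and lift via Lemma~\ref{claim_center_lifting} to land on some $y_k$. The paper's proof is simply a terser version of what you wrote; your added justification that $(a,\varkappa(a))^c\cap\Ws(y_0)=\{y_m:m\in\Z\}$ via Lemma~\ref{lemma_countable_covering} is a welcome clarification.
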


\begin{proof}
Let $(p, \varkappa(p))^c$ be the center interval that contains $z$.
Let $\alpha\colon[0,1]\to\Ws(a)$ be a path that connects $p$ and $a$.
Then by Lemma~\ref{claim_center_lifting} $\alpha$ lifts to
$\tilde\alpha$ that connects $z$ and some point $y_k$.
\end{proof}

\begin{lemma} \label{claim_uk_disjoint}
Sets $\U(y_k)$, $k\in\Z$, are mutually disjoint.
\end{lemma}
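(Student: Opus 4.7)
The plan is to derive a contradiction from the assumption that $\U(y_j)\cap\U(y_k)\neq\varnothing$ for some $j\neq k$, by projecting a path between $y_j$ and $y_k$ to a loop in $\Ws(a)$ and then invoking Lemma~\ref{claim_center_lifting}.

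First I would observe that, since $\U(y_j)$ and $\U(y_k)$ are path-connected components of the \emph{same} set $\ss\cap\Ws(y_0)$, a non-empty intersection forces $\U(y_j)=\U(y_k)$. In particular, there is a continuous path $\gamma\colon[0,1]\to\ss\cap\Ws(y_0)$ with $\gamma(0)=y_j$ and $\gamma(1)=y_k$. Since $\ss$ is a trivial continuous fiber bundle over $\Ws(a)$ whose fiber over $p$ is the center interval $(p,\varkappa(p))^c$, we have a continuous projection $\pi\colon\ss\to\Ws(a)$.

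Next I would note that by the conclusion of the proof of Lemma~\ref{lemma_countable_covering} we have $\Dc(y_0)=(a,\varkappa(a))^c$, and each $y_m$ (in particular $y_j$ and $y_k$) lies in this interval. Thus both endpoints of $\gamma$ are fibered over $a$, so $\alpha\stackrel{\mathrm{def}}{=}\pi\circ\gamma$ is a loop in $\Ws(a)$ based at $a$. Applying Lemma~\ref{claim_center_lifting} to $\alpha$ with starting point $z_0=y_j\in(a,\varkappa(a))^c\cap\Ws(y_0)$, we obtain a unique lift $\tilde\alpha\colon[0,1]\to\Ws(y_0)$ with $\tilde\alpha(t)\in(\alpha(t),\varkappa(\alpha(t)))^c$. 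But $\gamma$ itself is such a lift, so by uniqueness $\tilde\alpha=\gamma$.

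Finally, the ``moreover'' clause of Lemma~\ref{claim_center_lifting} asserts that the lift of a loop is a loop, hence $\gamma(1)=\gamma(0)$, which gives $y_k=y_j$ and therefore $k=j$, contradicting our assumption. There is essentially no obstacle here: the argument is purely a packaging of the monodromy statement in Lemma~\ref{claim_center_lifting}, together with the identification of $\Dc(y_0)$ as the single fiber $\pi^{-1}(a)\cap\Ws(y_0)$ of the bundle $\ss\to\Ws(a)$. The only point to watch is that path-components and connected components agree for the open subset $\ss\cap\Ws(y_0)$ of the manifold $\Ws(y_0)$, so the assumption $\U(y_j)\cap\U(y_k)\neq\varnothing$ indeed produces a continuous path between $y_j$ and $y_k$ as required.
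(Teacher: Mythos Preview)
Your proof is correct and follows essentially the same approach as the paper: assume two of the sets coincide, take a path in $\ss\cap\Ws(y_0)$ between the corresponding points $y_j$ and $y_k$, project it along the center fibers to a loop in $\Ws(a)$ based at $a$, and invoke Lemma~\ref{claim_center_lifting} to conclude that the lift must be a loop, contradicting $y_j\neq y_k$. Your version is a bit more explicit about the uniqueness of the lift and the identification of path-components with connected components, but the argument is the same.
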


\begin{proof}
Assume that $\U(y_i)=\U(y_j)$ with $i\ne j$. Then there is a path
$\tilde\alpha$ that connects $y_i$ and $y_j$ inside $\ss\cap\Ws(y_0)$.
Path $\tilde\alpha$ projects along the center fibers to a loop
$\alpha\colon S^1\to \Ws(a)$, $\alpha(0)=a$. But by
Lemma~\ref{claim_center_lifting} loop $\alpha$ lifts uniquely to a
loop in $\ss\cap\Ws(y_0)$. This gives us a contradiction since
$\tilde\alpha$ is a lift of $\alpha$ which is not a loop.
\end{proof}

\begin{lemma}\label{claim_uk_permuted}
The center holonomy $\H^c_{y_0}\colon\Ws(y_0)\to \Ws(y_0)$ cyclically
permutes sets $\U(y_k)$, $k\in\Z$, that is,
$\H^c_{y_0}(\U(y_k))=\U(y_{k+1})$, $k\in \Z$.
\end{lemma}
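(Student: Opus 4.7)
The plan is to reduce the identity $\H^c_{y_0}(\U(y_k))=\U(y_{k+1})$ to the following three observations, verified in order: (i) the center holonomy sends the base point $y_k$ to $y_{k+1}$; (ii) $\H^c_{y_0}$ restricts to a self-homeomorphism of $\ss\cap\Ws(y_0)$; (iii) a homeomorphism permutes path components, and (i) then determines the permutation.

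For (i) I will just unpack the construction in Lemma~\ref{lemma_countable_covering}: the increasing sequence $\{y_m;m\in\Z\}$ was produced by successively lifting the positive generator $\tau$ of $\pi_1(\E)$, so that consecutive terms $y_k, y_{k+1}$ are obtained by flowing along $\Wc(y_0)$ in the positive direction until the next hit of $\Ws(y_0)$. By definition of the first-return map this gives $\H^c_{y_0}(y_k)=y_{k+1}$.

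The substantive step is (ii). Given $z\in\ss\cap\Ws(y_0)$, pick $p\in\Ws(a)$ with $z\in(p,\varkappa(p))^c$. I want to identify $\Dc(z)$ with $(p,\varkappa(p))^c$: the latter is a connected subset of $\Wc(z)\cap\Ws(\E)$ containing $z$, and it cannot be extended past either endpoint because Lemma~\ref{claim_boundary_pairing} places the endpoints in $\Ws(a)\sqcup\Ws(\varkappa(a))\subset\partial\Ws(\E)$, hence outside $\Ws(\E)$. Applying Lemma~\ref{lemma_countable_covering} with base point $z$, the first positive return of $z$ to $\Ws(y_0)=\Ws(z)$ is the point $z_1\in\Dc(z)=(p,\varkappa(p))^c\subset\ss$, so $\H^c_{y_0}(z)\in\ss\cap\Ws(y_0)$. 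Running the same argument in reverse time shows $(\H^c_{y_0})^{-1}$ also preserves $\ss\cap\Ws(y_0)$, so $\H^c_{y_0}$ is a self-homeomorphism of $\ss\cap\Ws(y_0)$.

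For (iii), Lemmas~\ref{claim_uk_exhaust} and~\ref{claim_uk_disjoint} identify $\{\U(y_m)\}_{m\in\Z}$ as the full list of path components of $\ss\cap\Ws(y_0)$. A homeomorphism carries path components to path components, so $\H^c_{y_0}(\U(y_k))$ is one of the $\U(y_m)$; since it contains $\H^c_{y_0}(y_k)=y_{k+1}$, disjointness forces $\H^c_{y_0}(\U(y_k))=\U(y_{k+1})$. The main potential obstacle is step (ii)---specifically, ruling out that the first return could escape $\ss$---but this is precisely what the identification $\Dc(z)=(p,\varkappa(p))^c$ is designed to rule out, using the fact that $\partial\Ws(\E)$ lies outside $\Ws(\E)$.
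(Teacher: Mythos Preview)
Your argument is correct and follows essentially the same route as the paper: use $\H^c_{y_0}(y_k)=y_{k+1}$, observe that $\H^c_{y_0}$ carries $\ss\cap\Ws(y_0)$ into itself, and conclude that path components are permuted accordingly. The paper's write-up is terser --- it simply pushes a path $\tilde\alpha\subset\U(y_k)$ forward by $\H^c_{y_0}$ and asserts the image lands in $\U(y_{k+1})$ --- leaving implicit the step you spell out in (ii), namely that $\Dc(z)=(p,\varkappa(p))^c\subset\ss$ for $z\in\ss\cap\Ws(y_0)$; your version makes this point cleanly.
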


\begin{proof}
Recall that $\H^c_{y_0}(y_k)=y_{k+1}$. Consider any point $z\in\U(y_k)$
and a path $\tilde\alpha\colon[0,1]\to\U(y_k)$ that connects $y_k$ and
$z$. Then $\H^c_{y_0}\circ \tilde\alpha\colon[0,1]\to\Ws(y_0)$ is a
path that connects $y_{k+1}$ and $\H^c_{y_0}(z)$. Thus
$\H^c_{y_0}(\U(y_k))\subset \U(y_{k+1})$. A similar argument shows
that, in fact, $\H^c_{y_0}$ maps $\U(y_k)$ onto $\U(y_{k+1})$.
\end{proof}

\subsection{Wada Lakes structure on $\Ws(y_0)$}
\label{section_wada}
Wada Lakes are mutually disjoint connected open subsets of $\mathbb
R^d$ that share a common boundary. Clearly such open sets should have
rather weird shapes. It is possible to construct Wada Lakes through
an inductive procedure (see, \eg p. 143 of~\cite{HY}). Also Wada
Lakes appear naturally in various dynamical contexts (see,
\eg~\cite{C}).

Here we show that open sets $\U(y_k)$, $k\in\Z$, enjoy some
properties very similar to those of Wada Lakes.

The set $\{z\in \Ws(y_0)\colon\Dc(z)\ne\Wc(z)\}$ is open because of
continuity of the center foliation. Thus the set
$$
\K\stackrel{\mathrm{def}}{=}\{z\in \Ws(y_0)\colon\Wc(z)\subset\Ws(\E)\}
$$
is closed. Also note that $\K$ is not empty since $x\in\K$.

Given a point $z\in\K$ denote the multiplicity of the finite
covering projection $\H_{z\to \Ws(z)\cap\E}\colon\Ws(z)\to\E$ by $per(z)$.
Then, obviously, $(\H^c_{y_0})^{per(z)}(z)=z$. Therefore
$\H^c_{y_0}|_\K$ is a point-wise periodic homeomorphism.

\begin{lemma}\label{claim_dU_in_K}
For any $k\in\Z$ the boundary $\partial\U(y_k)$  of the
open set $\U(y_k)$ in $\Ws(y_0)$ is a subset of $\K$.
\end{lemma}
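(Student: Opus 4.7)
The plan is to argue by contradiction. Let $w\in\partial\U(y_k)$ (boundary taken in $\Ws(y_0)$) and suppose $w\notin\K$. Since $\U(y_k)$ is open we have $w\notin\U(y_k)$ and there is a sequence $z_n\in\U(y_k)$ with $z_n\to w$ in $\Ws(y_0)$. The assumption $w\notin\K$ means that $\Dc(w)\ne\Wc(w)$, so $\Dc(w)=(a_w,\varkappa(a_w))^c$ for two well-defined endpoints $a_w,\varkappa(a_w)\in\partial\Ws(\E)$.

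The key geometric step is to show that $a_w\in\Ws(a)$. Each $z_n\in\U(y_k)\subset\ss$ lies on a center arc $(p_n,\varkappa(p_n))^c$ with $p_n\in\Ws(a)$. By Corollary~\ref{cor_codim_1_boundary}, applied in the hatted setting of Section~\ref{section_reduction}, $\Ws(\E)^{\mathrm{cl}}$ is a manifold with smooth boundary, and its boundary is a disjoint union of stable leaves. In this manifold-with-boundary structure, the positively oriented lower endpoint of a center arc depends continuously on the base point, so $p_n\to a_w$ in $\Ws(\E)^{\mathrm{cl}}$. The leaf $\Ws(a)$, being a connected component of the boundary of a manifold with boundary, is clopen in $\partial\Ws(\E)$ and hence closed in $\Ws(\E)^{\mathrm{cl}}$. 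Therefore $a_w\in\Ws(a)$.

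Once $a_w\in\Ws(a)$, Lemma~\ref{claim_boundary_pairing} guarantees $\varkappa(a_w)\in\Ws(\varkappa(a))$, so $\Dc(w)=(a_w,\varkappa(a_w))^c$ is exactly one of the fibers making up $\ss$, and in particular $w\in\ss\cap\Ws(y_0)$. By Lemma~\ref{claim_uk_exhaust} there is some $j\in\Z$ with $w\in\U(y_j)$. The case $j=k$ contradicts $w\notin\U(y_k)$. The case $j\ne k$ contradicts $w\in\overline{\U(y_k)}$, since by Lemma~\ref{claim_uk_disjoint} the open set $\U(y_j)$ is then a neighborhood of $w$ disjoint from $\U(y_k)$. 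In either case we obtain a contradiction, proving $w\in\K$.

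The main obstacle is the continuity assertion $p_n\to a_w$: points of $\partial\Ws(\E)$ are not honest points of $M$ but equivalence classes of curves, so the convergence must be interpreted inside the compactification $\Ws(\E)^{\mathrm{cl}}$. Once this is set up via Corollary~\ref{cor_codim_1_boundary} (and the associated metric $\hat d$ constructed in Section~\ref{section_reduction}), continuity of the center foliation transfers to continuity of the boundary endpoint assignment, and the rest of the argument is a clean topological combination of Lemmas~\ref{claim_boundary_pairing}--\ref{claim_uk_disjoint}.
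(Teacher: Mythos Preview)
Your proof is correct and follows essentially the same approach as the paper's: both argue by contradiction, showing that if $w\notin\K$ then the endpoints of $\Dc(w)$ lie on $\Ws(a)$ and $\Ws(\varkappa(a))$, forcing $w\in\ss\cap\Ws(y_0)$ and hence $w\in\U(y_j)$ for some $j$, a contradiction. The only cosmetic difference is that the paper packages the continuity step as a neighborhood statement (a small connected $\V(z)\subset\Ws(y_0)$ has all its center arcs ending on the same pair of boundary components, by the argument of Lemma~\ref{claim_boundary_pairing}) and concludes directly $\V(z)\subset\U(y_k)$, whereas you phrase it as $p_n\to a_w$ in the completion and then invoke Lemmas~\ref{claim_uk_exhaust} and~\ref{claim_uk_disjoint}.
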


\begin{rmk}
Since $\U(y_k)\ne\Ws(y_0)$ it should have a non-empty boundary
$\partial\U(y_k)$ in $\Ws(y_0)$.
\end{rmk}

\begin{proof}
Pick a point $z\in\partial\U(y_k)$. If $z\not\in\K$ then it has a
small open connected neighborhood $\V(z)\subset\Ws(y_0)$ such that
for any point $z'\in\V(z)$ $\Dc(z')\neq\Wc(z')$.

Moreover, all the boundary points of $\Dc(z')$, $z'\in\V(z)$, lie on the
same two boundary components from $\partial\Ws(\E)$. (This can be seen from the argument in the proof of
Lemma~\ref{claim_boundary_pairing}.) Since the intersection $\V(z)\cap\U(y_k)$ 
is not empty the neighborhood $\V(z)$ must be a subset of
$\ss\cap\Ws(y_0)$; moreover, $\V(z)$ must be a subset of the connected
component of $y_k$, \ie  $\V(z)\subset\U(y_k)$. But this contradicts
to our assumption that $z\in\partial\U(y_k)$.
\end{proof}

\begin{lemma}\label{claim_uk_boundary_preserved}
Take any point $z\in\partial\U(y_k)$. Then $z\in\partial\U(y_{k+\ell
per(z)})$ for all $\ell\in\Z$.
\end{lemma}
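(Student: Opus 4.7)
The statement is a direct consequence of the fact that the center holonomy $\H^c_{y_0}$ is a homeomorphism of $\Ws(y_0)$ which cyclically permutes the open sets $\U(y_k)$ (Lemma~\ref{claim_uk_permuted}), together with the periodicity of points of $\K$ under $\H^c_{y_0}$.

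First I would observe that, since $\H^c_{y_0}\colon\Ws(y_0)\to\Ws(y_0)$ is a homeomorphism, it sends open sets to open sets and closures to closures, and therefore carries topological boundaries to topological boundaries. Combined with $\H^c_{y_0}(\U(y_k))=\U(y_{k+1})$, this yields
\[
\H^c_{y_0}\bigl(\partial\U(y_k)\bigr)=\partial\U(y_{k+1}),
\]
and by induction $(\H^c_{y_0})^n(\partial\U(y_k))=\partial\U(y_{k+n})$ for every $n\in\Z$.

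Next I would use Lemma~\ref{claim_dU_in_K}: any $z\in\partial\U(y_k)$ lies in $\K$. Recall that for $z\in\K$ the holonomy projection $\H_{z\to\Ws(z)\cap\E}\colon\Wc(z)\to\E$ is a $per(z)$-sheeted covering of circles, so traversing $\Wc(z)$ once projects to traversing $\E$ exactly $per(z)$ times. Since $\H^c_{y_0}$ is the first return to $\Ws(y_0)$ along the positively oriented center flow, this means that $(\H^c_{y_0})^{per(z)}(z)=z$; in particular $(\H^c_{y_0})^{\ell\cdot per(z)}(z)=z$ for every $\ell\in\Z$.

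Combining these two observations,
\[
z=(\H^c_{y_0})^{\ell\cdot per(z)}(z)\in(\H^c_{y_0})^{\ell\cdot per(z)}\bigl(\partial\U(y_k)\bigr)=\partial\U(y_{k+\ell\cdot per(z)}),
\]
which is exactly the statement of the lemma. There is no real obstacle here: all the work has already been done in Lemmas~\ref{claim_uk_permuted} and~\ref{claim_dU_in_K} and in the identification of $per(z)$ with the return time of the center holonomy on $\K$; the only care needed is to note that $\H^c_{y_0}$ is a homeomorphism, hence respects boundaries, which lets one transport the identity $(\H^c_{y_0})^{per(z)}(z)=z$ across all the relevant $\partial\U(y_{k+n})$.
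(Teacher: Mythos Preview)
Your proof is correct and follows the same approach as the paper: use Lemma~\ref{claim_dU_in_K} to place $z$ in $\K$ so that $(\H^c_{y_0})^{\ell\,per(z)}(z)=z$, and use Lemma~\ref{claim_uk_permuted} to transport $\partial\U(y_k)$ to $\partial\U(y_{k+\ell\,per(z)})$. The only difference is that you make explicit the fact that a homeomorphism carries boundaries to boundaries, which the paper leaves implicit.
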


\begin{proof}
By the previous lemma $z\in\K$ and thus $(\H^c_{y_0})^{\ell
per(z)}(z)=z$ for all $\ell \in\Z$. Recall that by
Lemma~\ref{claim_uk_permuted} $(\H^c_{y_0})^{\ell
per(z)}(\U(y_k))=\U(y_{k+\ell per(z)})$. The lemma follows.
\end{proof}

\begin{rmk}
We remark that it is not necessarily the case that $\Ws(y_0)$ is
decomposed as a disjoint union of $\K$ and $\U(y_k)$, $k\in\Z$.
Recall that the sequence $\{\U(y_k), k\in\Z\}$ is associated to a
pair of boundary components of $\Ws(\E)$: $\Ws(a)$ and $\Ws(\varkappa(a))$.
The boundary $\partial\Ws(\E)$ may have more, even infinitely many, 
components that can be paired in the same way and give rise to
other sequences of disjoint open sets inside $\Ws(y_0)$.

\end{rmk}

Only now we specialize to the codimension 2 case.
\begin{prop}\label{prop_C}
Let $\E$ be a length recurrent center leaf of a \ph \diff-sm $f$ that
satisfies assumptions of the Main Theorem. Assume that $\dim E^s=\dim E^u=2$. Also assume that $\Wc$ is
orientable. Then $\partial\W^\sigma(\E)=\varnothing$, $\sigma=s, u$.
\end{prop}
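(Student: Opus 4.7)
I argue by contradiction, treating only $\sigma = s$ (the unstable case follows by replacing $f$ with $f^{-1}$). Assume $\partial\Ws(\E) \neq \varnothing$ and pick $y_0 \in \Ws(\E)$ with $\Dc(y_0) \neq \Wc(y_0)$. Invoking all the structure built up in Sections~\ref{section_reduction}--\ref{section_wada} provides: boundary stable leaves $\Ws(a),\Ws(\varkappa(a)) \subset \partial\Ws(\E)$, both homeomorphic to $\R^2$ since $\dim E^s = 2$; the open strip $\ss$ as a trivial interval bundle over $\Ws(a)$; the bi-infinite sequence $\{y_k\}_{k\in\Z}$ in $\Ws(y_0) \cap \Wc(y_0)$ produced by Lemma~\ref{lemma_countable_covering}; the mutually disjoint path-connected open sets $\U(y_k) \subset \Ws(y_0)$ cyclically permuted by the center return $\H^c_{y_0}$ (Lemmas~\ref{claim_uk_exhaust}, \ref{claim_uk_disjoint}, \ref{claim_uk_permuted}); and the Wada-type incidence $\partial\U(y_k) \subset \K$ with $\H^c_{y_0}|_\K$ pointwise periodic (Lemmas~\ref{claim_dU_in_K}, \ref{claim_uk_boundary_preserved}).

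\textbf{Reduction to planar combinatorics.} Since $\dim E^s = 2$, the stable leaf $\Ws(y_0)$ is homeomorphic to $\R^2$. By the path-lifting property in Lemma~\ref{claim_center_lifting}, every loop in $\Ws(a)$ lifts to a loop inside $\U(y_k)$, so the projection $\U(y_k) \to \Ws(a)$ along the fibers of $\ss$ is a homeomorphism; in particular $\U(y_k) \cong \R^2$. Hence each $\U(y_k)$ is a simply connected open topological disk in the plane $\Ws(y_0)$, and the setup becomes: infinitely many pairwise disjoint open topological disks in $\R^2$, cyclically permuted by a self-homeomorphism, with common boundary in the $\H^c_{y_0}$-pointwise periodic set $\K$.

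\textbf{Deriving the contradiction and the main obstacle.} The plan is to analyze the accumulation of the orbit $\{y_k\}$ in $\Ws(y_0) \cong \R^2$. After extraction, either $y_k$ accumulates at some $z_\infty \in \Ws(y_0)$ or $y_k$ escapes to infinity. In the first case, continuity of $\H^c_{y_0}$ and the relation $\H^c_{y_0}(y_k)=y_{k+1}$ forces $z_\infty$ to be a fixed point of $\H^c_{y_0}$; one then exploits the fact that each $\U(y_k)$ is a global section of the bundle $\ss\to\Ws(a)$, so it has a definite ``thickness'' transverse to $\Ws(a)$ inherited from uniform continuity of $\Wc$. Combined with the planar embedding and Lemma~\ref{claim_boundary_pairing}, a small disk neighborhood of $z_\infty$ would have to meet the interiors of infinitely many disjoint disks $\U(y_k)$ each of uniform transverse thickness, a contradiction. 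To exclude the escape-to-infinity scenario one invokes $\dim E^u = 2$ and repeats the construction along $\Wu(y_0) \cong \R^2$: the local product structure of $\Ws$ with $\Wu$ transfers boundedness from the ambient compact manifold $M$ to the sequence $\{y_k\}$ inside $\Ws(y_0)$, forcing accumulation inside $\Ws(y_0)$. The main obstacle, and the technical heart of the argument, is precisely this transverse-thickness bookkeeping: pure planarity admits genuine Wada-type configurations in $\R^2$, so the contradiction must use the dynamical/geometric input that each $\U(y_k)$ is a bundle section of uniformly bounded geometry — this is exactly where both $\dim E^s = 2$ and $\dim E^u = 2$ are needed simultaneously.
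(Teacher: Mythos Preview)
Your setup (the first two paragraphs) is fine and matches the paper's scaffolding, but the contradiction you sketch in the third paragraph does not go through.

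\textbf{Case 1 (accumulation).} The claimed ``uniform transverse thickness'' of the $\U(y_k)$ near an accumulation point $z_\infty$ is not justified and is in fact false in general. You correctly observe that each $\U(y_k)$ is the image of a section of $\ss\to\Ws(a)$, but a section has zero thickness in the fiber direction; the relevant ``thickness'' is inside $\Ws(y_0)$, where there is no uniform lower bound---the $\U(y_k)$ can and typically do become arbitrarily thin as $k\to\infty$ (this is exactly what happens in genuine Wada constructions). Infinitely many disjoint open disks accumulating on a point is not a contradiction. Uniform continuity of $\Wc$ gives you nothing here because you are working on a non-compact leaf.

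\textbf{Case 2 (escape).} The argument is incoherent. The unstable leaf $\Wu(y_0)$ meets $\Ws(y_0)$ only at $y_0$; there is no way to ``repeat the construction along $\Wu(y_0)$'' inside $\Ws(\E)$. Compactness of $M$ says nothing about the location of the $y_k$ inside the non-compact plane $\Ws(y_0)$. More to the point, the paper's proof for $\sigma=s$ uses only $\dim E^s=2$; the hypothesis $\dim E^u=2$ is used only for the symmetric case $\sigma=u$. Your belief that both dimension hypotheses are needed simultaneously signals that the argument has gone astray.

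\textbf{What the paper actually does.} The paper avoids any accumulation/thickness analysis. It takes a smooth loop $\alpha$ in $\Ws(y_0)$ through $x\in\K$ that enters $\U(y_0)$; an interval $\alpha((t,s))$ of $\alpha\cap\U(y_0)$ has endpoints in $\K$ with some periods, and applying $(\H^c_{y_0})^k$ (with $k$ the product of those periods) to $\alpha|_{(t,s)}$ produces an arc $\beta\subset\U(y_k)$ with the same endpoints. The concatenation $\ell=\alpha|_{(t,s)}*\beta$ is a Jordan curve in $\Ws(y_0)\cong\R^2$. One then finds points of $\partial\U(y_0)$ on \emph{both} sides of $\ell$; by Lemma~\ref{claim_uk_boundary_preserved} both points lie in $\partial\U(y_m)$ for infinitely many $m$, contradicting connectedness of $\U(y_m)$ once $m\notin\{0,k\}$. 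The Jordan Curve Theorem is the only place $\dim E^s=2$ enters.
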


\begin{proof}
Assume that $\partial\Ws(\E)\ne \varnothing$. Then there exists a
point $y_0\in\Ws(\E)$ such that $\Dc(y_0)\ne\Wc(y_0)$ and all
constructions above apply. In particular, we have
the center holonomy $\H^c_{y_0}\colon \Ws(y_0)\to\Ws(y_0)$ and Wada
Lakes structure associated to it.

Consider a smooth loop $\alpha\colon S^1\to\Ws(x)$ that intersects
$\U(y_0)$ and passes through $x$. (Recall that
$x=\Ws(y_0)\cap\E$.) Then $\U(y_0)\cap\alpha(S^1)$ is a union of
disjoint open intervals. Let $\alpha((t,s))$ be one of these
intervals. Obviously
$\{\alpha(t),\alpha(s)\}\subset\partial\U(y_0)$. Therefore, by
Lemma~\ref{claim_dU_in_K}, $\alpha(t)$ and $\alpha(s)$ are in $\K$ and
must be fixed by some power of $\H^c_{y_0}$. Namely,
$$
(\H^c_{y_0})^{ per(\alpha(t))}(\alpha(t)) =\alpha(t) \quad \textup{and}
\quad  (\H^c_{y_0})^{per(\alpha(s))}(\alpha(s)) =\alpha(s).
$$
Consider the path $\beta\colon(t,s)\to\Ws(y_0)$ given by
$$
\beta(t')\stackrel{\mathrm{def}}{=}(\H^c_{y_0})^k(\alpha(t')), \qquad t'\in(t,s),
$$
where $k\stackrel{\mathrm{def}}{=}per(\alpha(t))per(\alpha(s))$. Obviously, $\beta$ has the
same endpoints as $\alpha$ and by Lemma~\ref{claim_uk_permuted}
$\beta((t,s))\in\U(y_k)$.

We concatenate $\alpha$ and $\beta$ to form a loop
$\ell\subset\Ws(y_0)$ half of which is in $\U(y_0)$ and the other
half in $\U(y_k)$.

Since $\Ws(y_0)$ is diffeomorphic to $\mathbb R^2$, the Jordan Curve Theorem
tells us that $\ell$ divides $\Ws(y_0)$ into the interior and
exterior parts.

We can find a point $z\in\partial\U(y_0)$ in the interior of $\ell$
and a point $\bar z\in\partial\U(y_0)$ in the exterior of $\ell$.
Indeed, just pick $z_0\in \ell\cap\U(y_0)$ and $z_1\in
\ell\cap\U(y_k)$ and connect $z_0$ and $z_1$ by a curve that lies in
the interior of $\ell$. Then point $z\in\partial\U(y_0)$ can be
found on this curve. Point $\bar z\in\partial\U(y_0)$ in the
exterior of $\ell$ can be found in a similar way.

Both $z$ and $\bar z$ are fixed by $(\H^c_{y_0})^{mper(z)per(\bar
z)}$, $m\in\Z$. Thus, by Lemma~\ref{claim_uk_permuted}, both $z$ and
$\bar z$ also belong to the boundaries of $\U(y_{mper(z)per(\bar
z)})$, $m\in \Z$. But this is impossible since $z$ and $\bar z$ lie
on different sides of $\ell$ and $\U(y_{mper(z)per(\bar z)})$ is
path connected and is different from $\U(y_0)$ and $\U(y_k)$ for a sufficiently large
$m$. We have arrived at a contradiction.
\end{proof}

\section{The proof of the Main Theorem}

We can assume that $\Wc$ is orientable. Otherwise we can pass to a double cover of $M$.

Let $\EuScript B$ be the bad set of $\Wc$. Assume that $\EuScript B\ne\varnothing$. It is
clear from the definition of $\EuScript B$ that $f\EuScript B=\EuScript B$.

Consider any $f$-invariant measure $\mu$ supported inside $\EuScript B$. Choose $C$
large enough so that the set
$$
A_C\stackrel{\mathrm{def}}{=}\{x\in\EuScript B: length(\Wc(x))\le C\}
$$
has positive $\mu$-measure. Then the Poincare Recurrence implies that there exists a length recurrent
center leaf $\E\subset A_C$.

Assume that $\dim E^s=2$. Then the discussion in Section~\ref{section_boundary} applies to
$\widehat\W^s(\widehat \E)$. In particular, given $x\in\widehat \E$ we have the center holonomy
$$
\H_x^c\colon\widehat\W^s(x)\to\widehat\W^s(x).
$$
By Proposition~\ref{prop_C} $\widehat\W^s(\widehat\E)$ does not have accessible boundary and hence the leaves $\widehat\W^c(y)$, $y\in\widehat\W^s(x)$, are complete circles embedded in $\widehat \W^s(\widehat\E)$. Then Lemma~\ref{lemma_finite_covering} implies that every $y\in\widehat\W^s(x)$ is a periodic 
point of $\H_x^c$.

Thus $\H_x^c$ is a point-wise periodic \homeo of $\widehat\W^s(x)$. By a classical theorem of Montgomery~\cite{M}
$\H_x^c$ must be periodic. This means that $\widehat \E$ has a small foliated neighborhood $V$ and that the holonomy group of $\widehat\E$ in $\widehat\W^s(\widehat\E)$ (that is, $G_x^s(\widehat\W^c)$, $x\in\widehat\E$) is finite. Then $p(V)$
(map $p\colon \widehat\W^s(\widehat\E)\to\W^s(\E)$ was defined in Section~\ref{section_reduction}) is a foliated neigborhood of $\E$ in $\W^s(\E)$ and the holonomy group $G_{p(x)}^s(\Wc)$ is also finite.

If $\dim E^s=1$ then the arguments from Section~\ref{section_boundary} are not needed. One can see that $G_x^s(\Wc)$, $x\in\E$,
has order $1$ or $2$ (the latter in the case when $\Ws$ is not orientable) by repeating the simple argument of 
Lemma~\ref{lemma_codim_1}.

Similarly we see that $G_x^u(\Wc)$, $x\in\E$, is also finite. Thus, by Proposition~\ref{prop_product_holonomy},
the full holonomy group is a finite group of order $k$.

Then by the Generalized Reeb Stability Theorem $\E$ has a small foliated neighborhood $U$ such that
every center leaf in $U$ covers $\E$ at most $k$ times. This implies that
$$
\forall z\in U \;\;\; length(\Wc(z))\le k\cdot length(\E)+K
$$ 
for some $K>0$.
But $\E$ is in the bad set so $length$ must be locally unbounded at $\E$. We have arrived at a contradiction. Hence
the bad set $\EuScript B$ is empty and the Main Theorem follows.
\begin{remark}
\begin{enumerate}
\item[]
\item The assumption $\dim E^s\le 2$, $\dim E^u\le 2$ is only used in the proof of Proposition~\ref{prop_C}.
Thus the Main Theorem would immediately generalize to the higher dimensional setup if one can show that Wada Lakes structure
described in Section~\ref{section_wada} cannot exist on leaves of dimension $\ge3$ as well.
\item Accessible boundary appears naturally whenever one looks at classification problems for 
\ph diffeomorphism. See, \eg~\cite{BI, BW}. It is also implicit in the proofs of Theorems~\ref{TA} and~\ref{TB}.
\end{enumerate}
\end{remark}

\end{document}